\theoremstyle{plain}
\newtheorem{thm}{Theorem}[section]
\newtheorem{lem}[thm]{Lemma}
\newtheorem{prop}[thm]{Proposition}
\newtheorem{cor}[thm]{Corollary}
\theoremstyle{definition}
\newtheorem{defi}[thm]{Definition}
\newtheorem{exam}[thm]{Example}
\newtheorem{rem}[thm]{Remark}
\newcommand{\R}{\mathbb R}
\newcommand{\Z}{\mathbb Z}
\newcommand{\nn}{\vskip 0.2cm}
\newcommand{\n}{\vskip 0.1cm}
\renewcommand{\@secnumfont}{\bfseries}
\def\section{%
  \@startsection{section}{1}
    {\z@}
    {2.0ex plus 0.8ex minus .1ex}
    {1.0ex plus .2ex}
    {\bfseries\large\centering\MakeUppercase}%
}
\begin{document}

\title [\ ] {A unifying view toward polyhedral products through panel structures}

\author{Li Yu}
\address{School of Mathematics, Nanjing University, Nanjing, 210093, P.R.China.
  }
 \email{yuli@nju.edu.cn}


\keywords{Polyhedral product, moment-angle complex, panel structure, torus action, topological face ring}


\thanks{2020 \textit{Mathematics Subject Classification}. 57S12 (Primary), 57S25, 55N91, 13F55.
 }

\begin{abstract}
A panel structure on a topological space is just a locally finite family of closed subspaces. A space together
with a panel structure is called a space with faces.
In this paper, we introduce a notion of polyhedral product over a space with faces. This notion provides a unifying viewpoint on the constructions of polyhedral products and generalized moment-angle complexes in various settings.
 We compute the stable decomposition of these spaces and use it to study their cohomology ring structures. Moreover, we can compute the equivariant cohomology ring of the moment-angle complex over a space with faces with respect to the canonical torus action. The calculation leads to the notion of topological face ring of a space with faces, which generalizes the classical notion of face ring (Stanley-Reisner ring) of a simplicial complex. We will see that many known results in the study of polyhedral products and moment-angle complexes can be reinterpreted from our general theorems on the polyhedral product over a space with faces. Moreover, we can derive some new results via our approach in some settings. 
  \end{abstract}

\maketitle

 \section{Introduction}
 
 \subsection{Moment-angle complex and polyhedral product}\ \n
 
  The construction of moment-angle complex $\mathcal{Z}_K$ over a simplicial complex $K$ first appeared in
  Davis and Januszkiewicz's pioneering work~\cite{DaJan91}. The name ``moment-angle complex'' was given by Buchstaber and Panov~\cite{BP00} afterwards. The more general definition of polyhedral product determined by a simplicial complex and a sequence of space pairs was introduced by Bahri, Bendersky, Cohen and Gitler~\cite{BBCG10}, which has become the major subject in the homotopy theoretic study in toric topology. The reader is also referred to~\cite{BBCG10} for more information on the earlier 
 works related to polyhedral products by 
 different authors. \n

  Let $K$ denote an (abstract) \emph{simplicial complex} with $m$ vertices labeled by the set $[m]=\{1,2,\cdots,m\}$. 
   We think of each $(k-1)$-simplex $\sigma$ of $K$ as
   a subset $\{i_1,\cdots, i_k\}\subseteq [m]$ such that if $\tau\subseteq \sigma$, then $\tau$ is a simplex of $K$. 
   In particular, the empty set $\varnothing$ is a simplex of $K$.\n
      
   Let $(\mathbb{X},\mathbb{A}) = \{ (X_j,A_j,a_j) \}^m_{j=1}$ be a sequence of based CW-complexes where $a_j\in A_j\subseteq X_j$, $1\leq j \leq m$.
    The \emph{polyhedral product} (or \emph{generalized moment-angle complex}) determined by $(\mathbb{X},\mathbb{A})$ and $K$, denoted by
    $(\mathbb{X},\mathbb{A})^K$ or $\mathcal{Z}(K;(\mathbb{X},\mathbb{A}))$, is defined to be 
     \begin{equation} \label{Equ:Poly-Prod-Simplicial}
       (\mathbb{X},\mathbb{A})^K =  \bigcup_{\sigma\in K} (\mathbb{X},\mathbb{A})^{\sigma} \, \subseteq \prod_{j\in [m]} X_j,\ \text{where}\ 
  (\mathbb{X},\mathbb{A})^{\sigma} = \prod_{j\in\sigma} X_j \times \prod_{j\in [m]\backslash \sigma} A_j.
    \end{equation}
    
    If $(\mathbb{X},\mathbb{A}) = \{ (X_j,A_j,a_j)=(X,A,a_0) \}^m_{j=1}$, we also denote 
    $(\mathbb{X},\mathbb{A})^K$ by
   $(X,A)^K$. In particular, 
   $(D^2,S^1)^K$ and $(D^1,S^0)^K$ are called the \emph{moment-angle complex} and \emph{real moment-angle complex} over $K$, respectively.\n
     
  We can extend the construction of $(\mathbb{X},\mathbb{A})^K$
   to
  polyhedral product over any simplicial poset (or simplicial cell complex) via the colimit construction. Such a generalization has been done by L\"u and  Panov~\cite{LuPanov11} for moment-angle complexes.\n 
  
  Another generalization is the \emph{moment-angle 
  space} of a convex polytope (not necessarily simple) introduced by Ayzenberg and Buchstaber~\cite{AntonBuch11}.\n
         
  More recently, 
   a notion of polyhedral product over a nice manifold with corners was introduced by Yu~\cite{Yu20}. The main idea of the construction in~\cite{Yu20} suggests that we can define
 a more general notion of polyhedral product 
  which unifies the constructions of various
 polyhedral products (up to homotopy equivalences). This is main purpose of this paper.
 \vskip .2cm
 
 \subsection{Cohomology ring} \ \n

  The cohomology ring of the moment-angle
complex over a simplicial complex $K$ was computed by Franz~\cite{Franz06} and  Baskakov, Buchstaber and Panov~\cite{BasBuchPanov04}. 
It is shown in~\cite{BasBuchPanov04} that the cohomology 
ring of $\mathcal{Z}_K$ with coefficients $\mathbf{k}$ (a commutative ring with unit) is isomorphic to the Tor-algebra
of the \emph{face ring} (or \emph{Stanley-Reisner ring}) $\mathbf{k}[K]$ of $K$ using the Koszul resolution of $\mathbf{k}[K]$.
 Recall $\mathbf{k}[K]$ is the quotient 
      \begin{equation} \label{Equ:Face-Ring-Def-1}
         \mathbf{k}[K]=\mathbf{k}[v_1,\cdots, v_m]\slash \mathcal{I}_{K}
       \end{equation}  
      where $\mathcal{I}_{K}$ 
      is the ideal generated by the monomials $v_{i_1}\cdots v_{i_s}$ for which 
      $\{i_1,\cdots, i_s\}$ does not span a
       simplex of $K$. A linear basis of $\mathbf{k}[K]$
       over $\mathbf{k}$ is given by
      \begin{equation} \label{Equ:Face-Ring-Basis}
       \{1\}\cup \big\{  v^{n_1}_{i_1}\cdots v^{n_s}_{i_s}\,|\,
        \varnothing\neq \{i_1,\cdots,i_s\} \in K,   \,  n_1>0,\cdots, n_s>0  \big\}. 
      \end{equation}
    \n

The cohomology 
ring of $\mathcal{Z}_K$ was also computed by Wang and Zheng~\cite{WangZheng15} using the simplicial complement of $K$ and
the Taylor resolution of $\mathbf{k}[K]$.\n

 For a general polyhedral product $(\mathbb{X},\mathbb{A})^K$, its cohomology ring structure was computed by Bahri, Bendersky, Cohen and Gitler~\cite{BBCG12} via partial diagonal maps and by Bahri, Bendersky, Cohen and Gitler~\cite{BBCG17}
by a spectral sequence 
under certain freeness conditions (coefficients in a field for example).
The study in this direction is further extended by Bahri, Bendersky, Cohen and Gitler~\cite{BBCG20}.
A computation using a different method was carried out by Zheng~\cite{Zheng16}.
 In this paper, we will mainly use the 
strategy in~\cite{BBCG12} to study the cohomology ring
of our polyhedral products.
\vskip .2cm

\subsection{Equivariant cohomology ring}\ \n
 
   For a topological group $G$, denote by $G\rightarrow EG\rightarrow BG$ the universal  principal $G$-bundle, with $EG$ a contractible space endowed with a free $G$-action, and $BG$ the orbit space. We may view $G$ as a subspace of $EG$ (the orbit of a basepoint).\n
   
    The \emph{Borel construction} of a space $X$ with a $G$-action is the quotient space
     $$  EG \times_G X = 
     EG \times X \big\slash \sim $$
    where $(e,x)\sim (eg,g^{-1}x)$ for any $e\in 
    EG$, $x\in X$ and $g\in G$.\n  
    
    Associated to the Borel construction, there is a canonical fiber bundle
    $$ X \rightarrow EG \times_G X \rightarrow BG. $$ 
    
    The equivariant cohomology ring of $X$, denoted by $H^*_G(X)$ is the (ordinary) cohomology ring of the Borel construction of $X$.\n
    
   In addition, if a subspace $A$ of $X$ is invariant under the $G$-action on $X$, we call $(X,A)$ a \emph{$G$-space pair}. Then     
   if in $(\mathbb{X},\mathbb{A}) = \{ (X_j,A_j,a_j) \}^m_{j=1}$, each $(X_j,A_j)$ is a $G_j$-space pair,
   the $G_j$-actions on the pairs $(X_j,A_j)$ extend canonically
   to an action of the product group $\prod_{j\in [m]} G_j$
   on the pair $\big(\prod_{j\in [m]} X_j, \prod_{j\in [m]} A_j\big)$. It is easy
   to check that this action of $\prod_{j\in [m]} G_j$ preserves the
   subspace $ (\mathbb{X},\mathbb{A})^K \subseteq \prod_{j\in [m]} X_j$ for any abstract simplicial complex $K$ on $[m]$. Then it is meaningful to study
   the equivariant cohomology ring of
   $(\mathbb{X},\mathbb{A})^K$ with respect to the $\prod_{j\in [m]} G_j$-action. \n
   
   When $G_1=\cdots=G_m = S^1$, the equivariant cohomology
   ring of $(D^2,S^1)^K$ with respect to the canonical
   $(S^1)^m$-action is shown to be isomorphic to
   the face ring $\Z[K]$ of $K$ (see~\cite{DaJan91} and Buchstaber and Panov~\cite{BP15}).
   This result was generalized to moment-angle complexes over simplicial posets in~\cite{LuPanov11}.
   \vskip .2cm
   
 \subsection{Panel structure}\ \n
 
  A \emph{panel structure} on a topological space $Y$ is just a locally finite family of closed subspaces
$\mathcal{P}=\{ P_{j} \}_{j\in \Lambda}$  (see Davis~\cite[Section 6]{Da83}). Each subspace $P_j$ is called a \emph{panel}. The space $Y$
 together with a panel structure $\mathcal{P}$ is called a \emph{space with faces}, denoted by $(Y,\mathcal{P})$.\n

   Suppose $\Lambda = [m]=\{1,\cdots, m\}$ is a finite set. For any subset $J\subseteq [m]$, let
    $$ P_J = \bigcup_{j\in J} P_j,\ P_{\varnothing}=\varnothing; \ \  P_{\cap J} = \bigcap_{j\in J} P_j,\
     P_{\cap\varnothing} =Y. $$

 It is clear that
 \[    P_{\cap J} \subseteq P_{J},\ \ P_J\subseteq P_{J'}, \ \ P_{\cap J'} \subseteq P_{\cap J},\ 
  \forall J\subseteq J' \subseteq [m]. \]
 
   Let
 $$Y_{c(\mathcal{P})} = Y\backslash P_{[m]}= Y\backslash  (P_1\cup \cdots \cup P_m)$$
  We called $Y_{c(\mathcal{P})}$ the \emph{core} of 
  $(Y,\mathcal{P})$.\n  
  
   Each path-connected component of $P_{\cap J}$
   is called a \emph{face} of $(Y,\mathcal{P})$.
   In particular, each path-connected component of $Y=P_{\cap\varnothing}$ is a face of $(Y,\mathcal{P})$.\n   
  In addition, let $\mathcal{S}_{(Y,\mathcal{P})}$ denote the \emph{face poset} of $(Y,\mathcal{P})$, which is 
    the set of all faces of $(Y,\mathcal{P})$ ordered by inclusion.
    \vskip .2cm
        
    \subsection{Polyhedral product over a space with faces}
    \ \n
     
    Let $\mathcal{P}=\{ P_j \}_{j\in [m]}$ be a panel structure on a topological space $Y$. For any face $f$ of $Y$,  the following subset of $[m]$ is called the \emph{panel index} of $f$.
  \begin{equation} \label{Equ:Face-Index}
    I_f = \{ j\in [m] \,|\, f\subseteq P_j \} \subseteq [m].
  \end{equation}

   Let $(\mathbb{X},\mathbb{A}) = \{ (X_j,A_j,a_j) \}^m_{j=1}$ be a sequence of based CW-complexes where $a_j\in A_j\subseteq X_j$, $1\leq j \leq m$.
      The \emph{polyhedral product of $(\mathbb{X},\mathbb{A})$ over $(Y,\mathcal{P})$} is
      $$ (\mathbb{X},\mathbb{A})^{(Y,\mathcal{P})} = \bigcup_{f\in \mathcal{S}_{(Y,\mathcal{P})}} (\mathbb{X},\mathbb{A})^f\, \subseteq
      Y\times \prod_{j\in [m]} X_j, \ \text{where} $$
    $$  (\mathbb{X},\mathbb{A})^f = 
     f \times \prod_{j\in I_f} X_j \times \prod_{j\in [m]\backslash I_f} A_j, \ \forall f\in \mathcal{S}_{(Y,\mathcal{P})}.$$    
    If $(\mathbb{X},\mathbb{A}) = \{ (X_j,A_j,a_j)=(X,A,a_0) \}^m_{j=1}$, we also denote 
    $(\mathbb{X},\mathbb{A})^{(Y,\mathcal{P})}$ by
   $(X,A)^{(Y,\mathcal{P})}$. In particular, we call
   $(D^2,S^1)^{(Y,\mathcal{P})}$ and $(D^1,S^0)^{(Y,\mathcal{P})}$ the \emph{moment-angle complex} and \emph{real moment-angle complex} over $(Y,\mathcal{P})$, respectively.\n

   \begin{exam}
      Let $\mathcal{P}=\{ P_j \}_{j\in [m]}$ be a panel structure on $Y$.
      \begin{itemize}
      \item If $P_1=\cdots=P_m=Y$, then $(\mathbb{X},\mathbb{A})^{(Y,\mathcal{P})} = Y\times  \prod_{j\in [m]} X_j$.\n
       
      \item If $P_1=\cdots=P_m=\varnothing$, then $(\mathbb{X},\mathbb{A})^{(Y,\mathcal{P})} = Y\times \prod_{j\in [m]} A_j$.\n
       
      \item  Let $\mathcal{P}=\{ P_j \}_{j\in [m]}$ and $\mathcal{P}'=\{ P'_j \}_{j\in [m]}$ be two panel structures on $Y$. If $P_j\subset P'_j$ for all $j\in [m]$, we call
      $\mathcal{P}$ a \emph{refinement} of $\mathcal{P}'$ and write
      $\mathcal{P} \preccurlyeq \mathcal{P}'$.
      It is clear that if $\mathcal{P} \preccurlyeq \mathcal{P}'$, then
   $(\mathbb{X},\mathbb{A})^{(Y,\mathcal{P})} \subseteq
     (\mathbb{X},\mathbb{A})^{(Y,\mathcal{P}')}$.        
      \end{itemize}

   So we obtain a hierarchy of spaces between $Y \times \prod_{j\in [m]} A_j$ and
   $Y\times  \prod_{j\in [m]} X_j$ given by the polyhedral products of $(\mathbb{X},\mathbb{A})$ over all the panel structures on $Y$.\n
   
 In categorical languages, let $\mathscr{L}_m(Y)$ be the category of all the panel structures on $Y$ with $m$ panels where the morphisms
 between two panel structures $\mathcal{P} \preccurlyeq \mathcal{P}'$ are the inclusions of corresponding panels.
 Then every $(\mathbb{X},\mathbb{A}) = \{ (X_j,A_j,a_j) \}^m_{j=1}$ determines a functor from $\mathscr{L}_m(Y)$ to
 the category of topological spaces which maps any
  $(Y,\mathcal{P})\in \mathscr{L}_m(Y)$ to the polyhedral product $(\mathbb{X},\mathbb{A})^{(Y,\mathcal{P})}$.
   \end{exam}
   
  Similarly to the study of moment-angle complexes and polyhedral products over simplicial complexes, we consider the following problems in this paper.
   \begin{itemize}
    \item Compute the stable decomposition of $(\mathbb{X},\mathbb{A})^{(Y,\mathcal{P})}$.\n
    \item Describe the cohomology ring structure of
     $(\mathbb{X},\mathbb{A})^{(Y,\mathcal{P})}$.\n
    \item Compute the equivariant cohomology of 
     $(D^2,S^1)^{(Y,\mathcal{P})}$ and  $(D^1,S^0)^{(Y,\mathcal{P})}$. 
    \end{itemize}

 \n
 
 We will study these problems in Section \ref{Sec:Stable-Decomp}, \ref{Sec:Cohom-Ring} and \ref{Sec:Equiv-Cohom}, respectively.
 The philosophy in our study is to treat a
 CW-complex with a panel structure as a polyhedron where
 each panel plays the role of a facet (codimension-one face). 
Therefore, many theorems and their proofs in Section \ref{Sec:Stable-Decomp}, \ref{Sec:Cohom-Ring} and \ref{Sec:Equiv-Cohom} 
 are parallel to the theorems in~\cite[Section\,4,\ Section\,5]{Yu20}. Because of this, we will only write the statements of these theorems and omit their proofs.  \n
 
 In Section~\ref{Sec:Reexam-Simplical-Complex}, Section~\ref{Sec:Reexam-Simplicial-Posets} and Section~\ref{Sec:Reexam-Manifold-Corners},
 we will reinterpret the constructions of polyhedral products over simplicial complexes, simplicial posets and manifold with corners, respectively, from the viewpoint of panel structures on a space; and recover many known results on these spaces from our general theorems
 of polyhedral products over a space with faces.
 Moreover, we can derive some new results via our approach in these settings (e.g. Proposition~\ref{Prop:Stable-Decomp-RZ-K}, Proposition~\ref{Prop:Real-Moment-Angle},
 Proposition~\ref{Prop:Simp-Poset-Stable-Decomp} and Proposition~\ref{Prop:Tor-Explain}). Especially, we will give a new way to write the cohomology ring structure of the ordinary (real)
 moment-angle complex of a simplicial complex.
 \n
 
   \noindent \textbf{Convention:} In the rest of the paper, 
  we assume that $Y$ is a CW-complex and every panel of
  a panel structure $\mathcal{P}$ on $Y$ is a CW-subcomplex of $Y$. 
 In addition, we assume that
  the number of faces in $(Y,\mathcal{P})$ is finite, i.e.
  $\mathcal{S}_{(Y,\mathcal{P})}$ is a finite poset.  
  \n

 In this paper, the term ``cohomology'' of a space $X$, denoted by $H^*(X)$, always mean singular cohomology with integral coefficients if not specified otherwise.

 \vskip .5cm
    
      \section{Topology of $(\mathbb{X},\mathbb{A})^{(Y,\mathcal{P})}$}
   \subsection{Stable decomposition of $(\mathbb{X},\mathbb{A})^{(Y,\mathcal{P})}$} \label{Sec:Stable-Decomp}
   
   \ \n 
   
   To do the stable decomposition of 
   $(\mathbb{X},\mathbb{A})^{(Y,\mathcal{P})}$ as the stable decomposition of a polyhedral product over a simplicial complex in~\cite{BBCG10}, we want to first think of $(\mathbb{X},\mathbb{A})^{(Y,\mathcal{P})}$ as the colimit of 
a diagram of CW-complexes over a poset.
The following are some basic definitions (see~\cite{ZiegZiv93}).
   
   \begin{itemize}
    \item Let $CW$ be the category of CW-complexes and continuous maps.\n
    
    \item  Let $CW_*$ be the category of based CW-complexes and based continuous maps. \n
    
     \item A \emph{diagram} $\mathcal{D}$ of CW-complexes or based CW-complexes over a finite poset $\mathscr{P}$ is a functor 
   $$\mathcal{D} : \mathscr{P}\rightarrow CW \ 
   \text{or}\ CW_*$$
    so that for every $p\leq p'$ in $\mathscr{P}$, there is a map 
     $d_{pp'}: \mathcal{D}(p')\rightarrow \mathcal{D}(p)$ with
     $$d_{pp}=id_{\mathcal{D}(p)},\ \ d_{pp'} d_{p'p''} = d_{pp''},\ \forall\, p\leq p'\leq p''.$$    
   \n
   
   \item The \emph{colimit} of $\mathcal{D}$ is the space
     \[ \mathrm{colim}(\mathcal{D})= \underset{p\in\mathscr{P}}{\mathrm{colim}}\, \mathcal{D}(p):=\Big(\coprod_{p\in \mathscr{P}} \mathcal{D}(p)\Big) \Big\slash \sim \]
     where $\sim$ denotes the equivalence relation generated by requiring that for each $x\in \mathcal{D}(p')$,
     $x\sim d_{pp'}(x)$ for every $p < p'$. \nn 
    \end{itemize}
    
    Let $(Y,\mathcal{P})$ be a space with faces
      where $\mathcal{P}=\{ P_j \}_{j\in [m]}$ is a panel structure on $Y$.
    We can think of  $(\mathbb{X},\mathbb{A})^{(Y,\mathcal{P})}$
    as a colimit of CW-complexes as follows.
    Let 
  \begin{equation} \label{Equ:poset-Y-P}
   \mathscr{P}_{(Y,\mathcal{P})} := \{ (f , L)\,|\, f\in \mathcal{S}_{(Y,\mathcal{P})}, L\subseteq I_f  \}
   \end{equation}
 be a poset where the partial order $\leq$ on $\mathscr{P}_{(Y,\mathcal{P})}$ is defined by 
 \[
  \text{$(f, L)\leq (f', L')$ if and only if
    $f\supseteq f'$ and $I_f\backslash L \supseteq I_{f'}\backslash L'$. }\]
   
        Note that $\mathscr{P}_{(Y,\mathcal{P})}$ is a finite poset since by our convention $(Y,\mathcal{P})$ only has finitely many faces.\n
     
 For any $ (f, L) \in \mathscr{P}_{(Y,\mathcal{P})} $, let 
   \begin{equation}
     (\mathbb{X},\mathbb{A})^{(f,L)} = 
       f \times \prod_{j\in I_f\backslash L} X_j \times \prod_{j\in [m]\backslash (I_f\backslash L)} A_j. 
   \end{equation}
       
 Clearly, $ (\mathbb{X},\mathbb{A})^{(f,L)} \subseteq  (\mathbb{X},\mathbb{A})^{(f,\varnothing)} =
     (\mathbb{X},\mathbb{A})^f$. So 
     $$ (\mathbb{X},\mathbb{A})^{(Y,\mathcal{P})} = \bigcup_{f\in \mathcal{S}_{(Y,\mathcal{P})}} (\mathbb{X},\mathbb{A})^f = \bigcup_{(f, L)\in \mathscr{P}_{(Y,\mathcal{P})}} (\mathbb{X},\mathbb{A})^{(f,L)}.  $$
      
  Next, define a diagram of CW-complexes 
  $$\mathbf{D}_{(\mathbb{X},\mathbb{A})}:
  \mathscr{P}_{(Y,\mathcal{P})}\rightarrow CW,\ \, \mathbf{D}_{(\mathbb{X},\mathbb{A})}((f, L))= (\mathbb{X},\mathbb{A})^{(f, L)}$$ 
  where $\big(d_{(\mathbb{X},\mathbb{A})}\big)_{(f, L),(f',L')}: \mathbf{D}_{(\mathbb{X},\mathbb{A})}((f',L'))\rightarrow
  \mathbf{D}_{(\mathbb{X},\mathbb{A})}((f, L)) $ is the natural inclusion for any $(f, L) \leq (f',L')\in \mathscr{P}_{(Y,\mathcal{P})}$.\n
   
 It is easy to see that $(\mathbb{X},\mathbb{A})^{(Y,\mathcal{P})}$ is the colimit of $\mathbf{D}_{(\mathbb{X},\mathbb{A})}$, that is
 \begin{equation} \label{Equ:Colimit-Construc}
    (\mathbb{X},\mathbb{A})^{(Y,\mathcal{P})} = \mathrm{colim} \big( \mathbf{D}_{(\mathbb{X},\mathbb{A})} \big) . 
 \end{equation}

 Next, we compute the homotopy type of the \emph{suspension}
  $\mathbf{\Sigma}\big( (\mathbb{X},\mathbb{A})^{(Y,\mathcal{P})} \big)$ of $(\mathbb{X},\mathbb{A})^{(Y,\mathcal{P})}$ under some special conditions on $(\mathbb{X},\mathbb{A})$. 
 The following two theorems are generalizations of ~\cite[Theorem 4.4]{Yu20} and~\cite[Theorem 4.12]{Yu20}, respectively.  The ``$\bigvee$'' and ``$\bigwedge$'' are wedge sum and smash product of spaces, respectively.

 \begin{thm} \label{Thm:Main-General-X-Contract}
   Let $(\mathbb{X},\mathbb{A})= \{ (X_j,A_j,a_j) \}^m_{j=1}$ where each $X_j$ is contractible and each $A_j$ is either connected or is a disjoint union of a connected CW-complex with its basepoint. 
   For any panel structure $\mathcal{P}=\{P_j\}_{j\in [m]}$ on a CW-complex $Y$, there is a homotopy equivalence
      \begin{equation*}  
         \mathbf{\Sigma}\big(  (\mathbb{X},\mathbb{A})^{(Y,\mathcal{P})} \big) \simeq \bigvee_{J\subseteq [m]} 
           \mathbf{\Sigma} \Big( Y\slash P_J \wedge\bigwedge_{j\in J} A_j \Big).
           \end{equation*} 
So the reduced homology group 
  $$ \widetilde{H}_*\big(  (\mathbb{X},\mathbb{A})^{(Y,\mathcal{P})} \big) \cong \bigoplus_{J\subseteq [m]} \widetilde{H}_*\big(Y\slash P_J \wedge\bigwedge_{j\in J} A_j \big).$$    
    \end{thm}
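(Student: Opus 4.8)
The plan is to combine Theorem~\ref{Thm:Stable-Decomp-X-A} with an identification of the colimit $\mathrm{colim}(\widehat{\mathbf{D}}^J_{(\mathbb{X},\mathbb{A})+})$ for each $J\subseteq[m]$, using the fact that the $X_j$ are contractible to collapse them away up to homotopy. First I would apply Theorem~\ref{Thm:Stable-Decomp-X-A} directly to reduce the statement to producing, for each fixed $J$, a homotopy equivalence
\[
\mathrm{colim}\big(\widehat{\mathbf{D}}^J_{(\mathbb{X},\mathbb{A})+}\big)\ \simeq\ Y\slash P_J\wedge\bigwedge_{j\in J}A_j.
\]
The right-hand side should be rewritten as $(Y_+\slash (P_J)_+)\wedge\bigwedge_{j\in J}A_j$, i.e. the smash of the quotient $Y_+\wedge\bigwedge_{j\in J}A_j$ by its subspace $(P_J)_+\wedge\bigwedge_{j\in J}A_j$; I expect this to match a colimit over $\mathscr{P}_{(Y,\mathcal{P})}$ on the nose once the $X_j$-factors are removed.

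The core of the argument is the homotopy-colimit comparison. I would define a second diagram $\widehat{\mathbf{E}}^J:\mathscr{P}_{(Y,\mathcal{P})}\to CW_*$ by replacing each contractible $X_j$ in \eqref{Equ:E-hat-J-Sigma-I-L} by a point, i.e.
\[
\widehat{\mathbf{E}}^J((f,L)):=f_+\wedge\bigwedge_{j\in J\setminus(I_f\setminus L)}A_j,
\]
with the natural inclusions as structure maps (here the convention $B\wedge\widehat X^{\varnothing}=B$ handles $J\cap(I_f\setminus L)=\varnothing$, and a smash with an $A_j$ for $j\in J\setminus(I_f\setminus L)$ with $A_j$ allowed to be just a point keeps things well-defined). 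The projections $X_j\to\ast$ induce a map of diagrams $\widehat{\mathbf{D}}^J_{(\mathbb{X},\mathbb{A})+}\to\widehat{\mathbf{E}}^J$ which is a based homotopy equivalence at every object of $\mathscr{P}_{(Y,\mathcal{P})}$, since smashing with contractible based CW-complexes is a homotopy equivalence. Then Theorem~\ref{Thm:Homotopy-Colimit-Lemma} gives $\mathrm{colim}(\widehat{\mathbf{D}}^J_{(\mathbb{X},\mathbb{A})+})\simeq\mathrm{colim}(\widehat{\mathbf{E}}^J)$, provided the relevant maps $\mathrm{colim}_{q>p}(-)\hookrightarrow(-)$ are closed cofibrations — which holds because everything in sight is a CW-pair (by the standing convention that faces are CW-subcomplexes) and the inclusions are inclusions of subcomplexes.

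It then remains to evaluate $\mathrm{colim}(\widehat{\mathbf{E}}^J)$. The point here is that over the poset $\mathscr{P}_{(Y,\mathcal{P})}$, for fixed $J$, the factor $\bigwedge_{j\in J\setminus(I_f\setminus L)}A_j$ only ``grows'' as we pass to smaller faces and larger $L$, and maximizing $I_f\setminus L$ (taking $L=\varnothing$ and $f$ a component of $Y$ itself, where $I_f=\varnothing$) forces all of $J$ into the $A$-part. Unwinding the colimit one sees that a point $y\in f$ lies in $P_j$ iff $j\in I_f$, so the identifications glue the various $f_+\wedge\bigwedge_{j\in J\setminus(I_f\setminus L)}A_j$ into $\bigl(Y_+\wedge\bigwedge_{j\in J}A_j\bigr)\big/\bigl(P_J{}_+\wedge\bigwedge_{j\in J}A_j\bigr)$: a point over $P_J$ is collapsed because over it at least one coordinate $j\in J$ has been sent to the base-point of $A_j$ (as $j\in I_f$ for the face $f$ through that point contained in $P_j$), while over the core $Y\setminus P_J$ no collapsing occurs. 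This quotient is precisely $Y\slash P_J\wedge\bigwedge_{j\in J}A_j$. The homology statement follows by taking reduced homology of the wedge and using that $\widetilde H_*(\mathbf\Sigma Z)\cong\widetilde H_{*-1}(Z)$ together with $\widetilde H_*((\mathbb X,\mathbb A)^{(Y,\mathcal P)}_+)\cong\widetilde H_*((\mathbb X,\mathbb A)^{(Y,\mathcal P)})$ (adding a disjoint base-point does not change reduced homology in positive degrees, and the degree-zero contribution from $\widehat y_0$ matches the $J=\varnothing$ summand on the right).

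The main obstacle I anticipate is the bookkeeping in the last step: correctly checking that the gluing relations generated by the structure maps $(\widehat d^J_{(\mathbb X,\mathbb A)+})_{(f,L),(f',L')}$ produce exactly the smash quotient by $P_J$ and nothing more — in particular that the colimit does not over-collapse (one must verify that distinct points of the core $Y_{c(\mathcal P)}$, and distinct smash-coordinates in $\bigwedge_{j\in J}A_j$ over them, remain distinct), and that it does collapse everything over $P_J$. This is where the precise definition of the partial order on $\mathscr{P}_{(Y,\mathcal{P})}$ (via $f\supseteq f'$ and $I_f\setminus L\supseteq I_{f'}\setminus L'$) does the work, since it guarantees that for a point $y\in P_j$ lying in face $f$ one can always find a larger element of the poset in which $j$ has been moved into the $A_j$-slot, forcing the collapse. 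Once that combinatorial identification is pinned down, the rest is formal.
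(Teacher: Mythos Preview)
Your overall strategy is exactly the paper's: invoke Theorem~\ref{Thm:Stable-Decomp-X-A}, replace $\widehat{\mathbf{D}}^J_{(\mathbb{X},\mathbb{A})+}$ by a simpler diagram via Theorem~\ref{Thm:Homotopy-Colimit-Lemma}, then identify the colimit with $(Y/P_J)\wedge\bigwedge_{j\in J}A_j$. The combinatorial analysis in your last two paragraphs is also essentially what the paper does.

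There is, however, a genuine gap in your definition of $\widehat{\mathbf{E}}^J$. You write
\[
\widehat{\mathbf{E}}^J((f,L)) = f_+\wedge\bigwedge_{j\in J\setminus(I_f\setminus L)}A_j
\]
as the result of ``replacing each $X_j$ by a point''. But smashing with a one-point space does not delete that factor; it collapses the entire smash product to the basepoint. So whenever $J\cap(I_f\setminus L)\neq\varnothing$, the correct replacement is a single point $[\widehat{y}^J_0]$, not the nontrivial space you wrote. With your formula the claim ``with the natural inclusions as structure maps'' is also false: for $(f,L)\leq(f',L')$ the source $f'_+\wedge\bigwedge_{j\in J\setminus(I_{f'}\setminus L')}A_j$ has \emph{more} $A_j$-factors than the target, and there is no inclusion in that direction inside any common ambient space. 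Your later heuristic that ``at least one coordinate $j\in J$ has been sent to the basepoint of $A_j$'' reveals the right picture, but it does not match the diagram you actually defined.

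The fix is precisely what the paper does: set
\[
\widehat{\mathbf{E}}^J_{(\mathbb{X},\mathbb{A})+}((f,L)) =
\begin{cases}
f_+\wedge\bigwedge_{j\in J}A_j, & J\cap(I_f\setminus L)=\varnothing,\\[2pt]
[\widehat{y}^J_0], & J\cap(I_f\setminus L)\neq\varnothing,
\end{cases}
\]
with structure maps either inclusions or the constant map to $[\widehat{y}^J_0]$. The map of diagrams $\widehat{\mathbf{D}}^J\to\widehat{\mathbf{E}}^J$ is then the identity in the first case and the collapse in the second, and Theorem~\ref{Thm:Homotopy-Colimit-Lemma} applies. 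With this corrected diagram, your colimit computation goes through: the faces contributing nontrivially are exactly those with $J\subseteq[m]\setminus I_f$ or more generally $J\cap I_f=\varnothing$ for some choice of $L$, which assemble to $Y$, while the faces forced to the basepoint cover $P_J$; the identifications along faces meeting both types collapse $P_J$, yielding $(Y/P_J)\wedge\bigwedge_{j\in J}A_j$ as you wanted.
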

    \begin{proof}
    The proof is completely parallel to the proof of~\cite[Theorem 4.4]{Yu20}.
    \end{proof}

   \begin{thm} \label{Thm:Stable-Decomp-A-Contract}
   Let $(\mathbb{X},\mathbb{A})= \{ (X_j,A_j,a_j) \}^m_{j=1}$ where each $A_j$ is contractible and each $X_j$ is either connected or is a disjoint union of a connected CW-complex with its basepoint. 
   For any panel structure $\mathcal{P}=\{P_j\}_{j\in [m]}$ on a CW-complex $Y$, there is a homotopy equivalence
  \begin{align*}
   S^1\vee \mathbf{\Sigma}\big( (\mathbb{X},\mathbb{A})^{(Y,\mathcal{P})} \big)  
 & \simeq \bigvee_{J\subseteq [m]} 
 \mathbf{\Sigma}  \Big(  \big( P_{\cap J}\cup y_0\big)  \wedge \bigwedge_{j\in J}  X_j \Big).
 \end{align*}
 where $y_0$ is a point disjoint from $Y$.
 This implies
 \[  H_*\big((\mathbb{X},\mathbb{A})^{(Y,\mathcal{P})} \big) \cong
   \bigoplus_{J\subseteq [m]} \widetilde{H}_* \Big(  \big( P_{\cap J}\cup y_0\big)  \wedge \bigwedge_{j\in J}  X_j \Big).\]
  \end{thm}
\begin{proof}
    The proof is parallel to the proof of~\cite[Theorem 4.12]{Yu20}.
    \end{proof}  

  \vskip .2cm

  \subsection{Cohomology ring of $(\mathbb{X},\mathbb{A})^{(Y,\mathcal{P})}$}  \label{Sec:Cohom-Ring}
  \ \n
 It was shown in~\cite{BBCG12} that the cohomology ring structure of a polyhedral product over a simplicial complex can be computed through
 the stable decomposition and partial diagonal maps of the polyhedral product.
For a panel structure $\mathcal{P}$ on a finite CW-complex $Y$,
since we also have the stable decomposition of $(\mathbb{X},\mathbb{A})^{(Y,\mathcal{P})}$, we can describe the cohomology ring of 
$(\mathbb{X},\mathbb{A})^{(Y,\mathcal{P})}$ in a similar way.\n

  Under some special conditions on $(\mathbb{X},\mathbb{A})$ (e.g. the conditions in Theorem~\ref{Thm:Main-General-X-Contract} and Theorem~\ref{Thm:Stable-Decomp-A-Contract}), we can write the cohomology ring structure of $(\mathbb{X},\mathbb{A})^{(Y,\mathcal{P})}$
  explicitly. For example, let 
    $$(\mathbb{D},\mathbb{S}) =
    \big\{ \big( D^{n_j+1}, S^{n_j}, a_j \big) \big\}^m_{j=1}$$
    where $D^{n+1}$ is the unit ball
      in $\R^{n+1}$ and $S^n=\partial D^{n+1}$.      
    We can describe the cohomology ring structure of
    $(\mathbb{D},\mathbb{S})^{(Y,\mathcal{P})}$ by the following ring. Let
  \begin{equation} \label{Equ:Cohomology-Ring-R}
    \mathcal{R}^*_{Y,\mathcal{P}} :=\bigoplus_{J \subseteq [m]} H^*(Y,P_J)
    \end{equation} 
  
  Define a graded ring structure $\Cup^{(\mathbb{D},\mathbb{S})}$ on $\mathcal{R}^*_{Y,\mathcal{P}} $ as follows. 
       \begin{itemize}
   \item If $J\cap J'=\varnothing$ or $J\cap J' \neq \varnothing$ but $n_j=0$ for all $j\in J\cap J'$,\\   
  $ H^*(Y,P_J) \otimes H^{*}(Y,P_{J'}) \xlongrightarrow{\Cup^{(\mathbb{D},\mathbb{S})}}
    H^{*}(Y,P_{J\cup J'})$ is the relative cup product $\cup$.\nn
          
     \item If $J\cap J' \neq \varnothing$ and there exists $n_j\geq 1$ for some $j\in J\cap J'$, 
    \\   
  $ H^*(Y,P_J) \otimes H^{*}(Y,P_{J'}) \xlongrightarrow{\Cup^{(\mathbb{D},\mathbb{S})}}
    H^{*}(Y,P_{J\cup J'})$ is trivial.
    \end{itemize}

   Let   \begin{equation*}
    \widetilde{\mathcal{R}}^*_{Y,\mathcal{P}} :=\bigoplus_{J \subseteq [m]} \widetilde{H}^*(Y\slash P_J), \ \text{then}\
    \mathcal{R}^*_{Y,\mathcal{P}} =  \widetilde{\mathcal{R}}^*_{Y,\mathcal{P}} \oplus \Z.
    \end{equation*} 
    
    By~\cite[Lemma 3.2]{Yu20}, there is a natural product on $\widetilde{\mathcal{R}}^*_{Y,\mathcal{P}}$,
    denoted by $\widetilde{\Cup}^{(\mathbb{D},\mathbb{S})}$,
    that is induced from the product $\Cup^{(\mathbb{D},\mathbb{S})}$ on $\mathcal{R}^*_{Y,\mathcal{P}}$.     
   So we have a commutative diagram 
     \begin{equation}  \label{Equ:Diagram-Induced-Prod}
   \xymatrix{
           H^*(Y,P_J) \otimes H^*(Y,P_{J'}) \ar[d] \ar[r]^{\quad\ \ \ \ \scalebox{0.85}{$\Cup^{(\mathbb{D},\mathbb{S})}$}}
                &   H^*(Y,P_{J\cup J'}) \ar[d]  \\
           \widetilde{H}^*(Y\slash P_J) \otimes \widetilde{H}^*(Y\slash P_{J'}) \ar[r]^{\quad\ \ \ \  \, \scalebox{0.85}{$\widetilde{\Cup}^{(\mathbb{D},\mathbb{S})}$}} & \widetilde{H}^*(Y\slash P_{J\cup J'}) 
                 }  
            \end{equation}  
            
         \begin{itemize}
   \item If $J\cap J'=\varnothing$ or $J\cap J' \neq \varnothing$ but $n_j=0$ for all $j\in J\cap J'$, then\\   
  $ \widetilde{H}^*(Y\slash P_J) \otimes \widetilde{H}^{*}(Y\slash P_{J'}) \xlongrightarrow{\widetilde{\Cup}^{(\mathbb{D},\mathbb{S})}}
    \widetilde{H}^{*}(Y\slash P_{J\cup J'})$ is the product $\widetilde{\cup}$
    induced from the relative cup product 
     $ H^*(Y,P_J) \otimes H^{*}(Y,P_{J'}) \xlongrightarrow{\cup}
    H^{*}(Y,P_{J\cup J'})$.\nn
          
     \item If $J\cap J' \neq \varnothing$ and there exists $n_j\geq 1$ for some $j\in J\cap J'$, then
    \\   
  $ \widetilde{H}^*(Y\slash P_J) \otimes \widetilde{H}^{*}(Y\slash P_{J'}) \xlongrightarrow{\widetilde{\Cup}^{(\mathbb{D},\mathbb{S})}}
    \widetilde{H}^{*}(Y\slash P_{J\cup J'})$ is trivial.
    \end{itemize}        
            
  It is clear that $\big( \mathcal{R}^*_{Y,\mathcal{P}},
   \Cup^{(\mathbb{D},\mathbb{S})} \big)$ and
   $\big( \widetilde{\mathcal{R}}^*_{Y,\mathcal{P}},
   \widetilde{\Cup}^{(\mathbb{D},\mathbb{S})} \big)$
   determine each other.\n

  \begin{thm} \label{Thm:Main-Dn-Sn-1}
   For any panel structure $\mathcal{P}=\{P_j\}_{j\in [m]}$ on a CW-complex $Y$, there is a ring isomorphism (up to a sign) from 
   $\big( \mathcal{R}^*_{Y,\mathcal{P}}, \Cup^{(\mathbb{D},\mathbb{S})} \big)$ to the integral 
   cohomology ring of  $(\mathbb{D},\mathbb{S})^{(Y,\mathcal{P})}$.  
    Moreover, we can make the ring isomorphism degree-preserving by shifting the degrees of all the elements in $H^*(Y,P_J)$ up by $\sum_{j\in J} n_j $ for every $J\subseteq [m]$. 
  \end{thm}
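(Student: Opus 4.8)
The plan is to specialize the general ring isomorphism of Theorem~\ref{Thm:Cohomology-Ring-Isom-X-A} to the pairs $(\mathbb{D},\mathbb{S}) = \{(D^{n_j+1},S^{n_j},a_j)\}_{j=1}^m$, and then to identify the abstract summands $\widetilde{H}^*(\mathrm{colim}(\widehat{\mathbf{D}}^J_{(\mathbb{D},\mathbb{S})+}))$ together with their $\circledast$-product with the explicitly described ring $(\widetilde{\mathcal{R}}^*_{Y,\mathcal{P}},\widetilde{\Cup}^{(\mathbb{D},\mathbb{S})})$. First I would observe that each $D^{n_j+1}$ is contractible, so Theorem~\ref{Thm:Main-General-X-Contract} applies and gives a homotopy equivalence $\mathrm{colim}(\widehat{\mathbf{D}}^J_{(\mathbb{D},\mathbb{S})+}) \simeq Y/P_J \wedge \bigwedge_{j\in J} S^{n_j}$ (this is exactly the computation of $\mathrm{colim}(\widehat{\mathbf{E}}^J_{(\mathbb{D},\mathbb{S})+})$ carried out inside the proof of Theorem~\ref{Thm:Main-General-X-Contract}, with $A_j = S^{n_j}$). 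Hence $\widetilde{H}^*(\mathrm{colim}(\widehat{\mathbf{D}}^J_{(\mathbb{D},\mathbb{S})+})) \cong \widetilde{H}^{*-\sum_{j\in J} n_j}(Y/P_J) \cong H^{*-\sum_{j\in J}n_j}(Y,P_J)$, the isomorphism being the suspension (cross product with the fundamental classes of the spheres $S^{n_j}$). Summing over $J$, this gives an additive isomorphism $\bigoplus_J \widetilde{H}^*(\mathrm{colim}(\widehat{\mathbf{D}}^J_{(\mathbb{D},\mathbb{S})+})) \cong \mathcal{R}^*_{Y,\mathcal{P}}$ after the degree-shift $H^*(Y,P_J)\mapsto H^{*+\sum_{j\in J}n_j}(Y,P_J)$, which is precisely the promised degree-shifting making the final map degree-preserving.

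The substance of the proof is then to check that under this additive identification the product $\circledast$ from~\eqref{Equ:Ring-Structure-2} goes over to $\widetilde{\Cup}^{(\mathbb{D},\mathbb{S})}$. By definition $u \circledast v = (\widehat{\Delta}^{J,J'}_{J\cup J',Y_+})^*(u\times v)$, so I would trace the partial diagonal $\widehat{\Delta}^{J,J'}_{J\cup J',Y_+}: Y_+\wedge \widehat{X}^{J\cup J'} \to (Y_+\wedge\widehat{X}^J)\wedge(Y_+\wedge\widehat{X}^{J'})$ through the homotopy equivalences above. Writing an element of $\widetilde{H}^*(Y/P_J\wedge\bigwedge_{j\in J}S^{n_j})$ as $\phi \times \largetimes_{j\in J}\iota_j$ with $\phi\in\widetilde{H}^*(Y/P_J)$ and $\iota_j$ the sphere class, Lemma~\ref{Lem:Cross-Product-Pull-Back} computes the effect of the sphere-coordinate part of the partial diagonal: on indices $j\in J\cap J'$ it inserts the reduced diagonal $\Delta_j^*: \widetilde{H}^*(S^{n_j})\otimes\widetilde{H}^*(S^{n_j})\to\widetilde{H}^*(S^{n_j})$, while on indices in $J\triangle J'$ it behaves like a cross product. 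The key point is that $\Delta^*_{S^{n_j}}(\iota_j\times\iota_j)=0$ whenever $n_j\geq 1$ (the reduced diagonal of a positive-dimensional sphere is null-homotopic, as in Lemma~\ref{Lem:Reduced-Diagonal-Null}), which forces $u\circledast v = 0$ in exactly the case ``$J\cap J'\neq\varnothing$ and some $n_j\geq 1$ with $j\in J\cap J'$'' — matching the trivial case in the definition of $\Cup^{(\mathbb{D},\mathbb{S})}$. In the complementary case (either $J\cap J'=\varnothing$, or all relevant $n_j=0$ so that $S^{n_j}=S^0$ and $\Delta^*_{S^0}$ is the nonzero comultiplication), the $Y_+$-coordinate part of the partial diagonal is just the reduced diagonal $\Delta_{Y_+}$, and Lemma~\ref{Lem:Induced-Cup-Prod} identifies the resulting operation on $\widetilde{H}^*(Y/P_J)\otimes\widetilde{H}^*(Y/P_{J'})$ with the product $\widetilde{\cup}$ induced by the relative cup product $H^*(Y,P_J)\otimes H^*(Y,P_{J'})\to H^*(Y,P_{J\cup J'})$. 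Combining these, $\circledast$ is carried to $\widetilde{\Cup}^{(\mathbb{D},\mathbb{S})}$, hence to $\Cup^{(\mathbb{D},\mathbb{S})}$ on $\mathcal{R}^*_{Y,\mathcal{P}}$ via the equivalence of these two rings noted after diagram~\eqref{Equ:Diagram-Induced-Prod}. Then Theorem~\ref{Thm:Cohomology-Ring-Isom-X-A} gives the ring isomorphism onto $H^*((\mathbb{D},\mathbb{S})^{(Y,\mathcal{P})})$, and the degree-shift above makes it degree-preserving.

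The main obstacle I anticipate is the bookkeeping of signs, shuffles and degree shifts: one must make sure that the shuffle homeomorphisms $\Theta^{J,J'}_I$ of~\eqref{Equ:Shuffle} (which reorder the sphere factors and the two copies of $Y_+$) interact correctly with the suspension isomorphisms identifying $\widetilde{H}^*(Y/P_J\wedge\bigwedge_{j\in J}S^{n_j})$ with $H^{*+\sum n_j}(Y,P_J)$, and that no spurious sign obstructs the identification of the product. A secondary care point is the two copies of $Y_+$ appearing on the target side of $\widehat{\Delta}^{J,J'}_{J\cup J',Y_+}$: one has to check that after restricting to the colimit and using the homotopy equivalences, the two $Y_+$-coordinates are genuinely fed into the diagonal $\Delta_{Y_+}$ so that Lemma~\ref{Lem:Induced-Cup-Prod} is directly applicable; this is the analogue of the corresponding verification in \cite[Sec.\,5]{Yu20} for manifolds with corners, and I expect the argument there to transfer essentially verbatim. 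Once these compatibilities are pinned down, the remaining steps are the routine identifications described above.
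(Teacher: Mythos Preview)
Your proposal is correct and follows essentially the same approach as the paper's proof: reduce via Theorem~\ref{Thm:Main-General-X-Contract} to $\mathrm{colim}(\widehat{\mathbf{D}}^J_{(\mathbb{D},\mathbb{S})+})\simeq (Y/P_J)\wedge\bigwedge_{j\in J}S^{n_j}$, then split into the cases $J\cap J'\neq\varnothing$ with some $n_j\geq 1$ (trivial product by Lemma~\ref{Lem:Reduced-Diagonal-Null}) versus the remaining cases (relative cup product via Lemmas~\ref{Lem:Cross-Product-Pull-Back} and~\ref{Lem:Induced-Cup-Prod}). The paper organizes the non-trivial case by introducing $J_0=\{j:n_j=0\}$ and writing the degree shift as $N_{J\backslash J_0}$, which coincides with your $\sum_{j\in J}n_j$; the sign/shuffle bookkeeping you flag as an obstacle is handled in the paper simply by invoking the cited lemmas without further comment.
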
     
  \begin{proof}
   The proof is parallel to the proof of~\cite[Theorem 4.8\,(b)]{Yu20}.           
    \end{proof}
    \n
    
    By Theorem~\ref{Thm:Main-Dn-Sn-1}, we have the following results
    for $(D^2,S^1)^{(Y,\mathcal{P})}$ and $(D^1,S^0)^{(Y,\mathcal{P})}$.
    
    \begin{cor} \label{Cor:D2-S1}   
   For any panel structure $\mathcal{P}=\{P_j\}_{j\in [m]}$ on a finite CW-complex $Y$, there is a ring isomorphism
   (up to a sign) from $\big( \mathcal{R}^*_{Y,\mathcal{P}}, \Cup \big)$ to the integral cohomology ring of $(D^2,S^1)^{(Y,\mathcal{P})}$, where the $\Cup$ on $\mathcal{R}^*_{Y,\mathcal{P}}$
   is defined by:
    \begin{itemize}
       \item If $J\cap J' \neq \varnothing$,
     $ H^*(Y,P_J) \otimes H^{*}(Y,P_{J'}) \overset{\Cup}{\longrightarrow}
    H^{*}(Y,P_{J\cup J'})$ is trivial.\n
    
     \item If $J\cap J'=\varnothing$, 
  $ H^*(Y,P_J) \otimes H^{*}(Y,P_{J'}) \overset{\Cup}{\longrightarrow}
    H^{*}(Y,P_{J\cup J'})$ is the relative cup product $\cup$.     
    \end{itemize} 
    Moreover, we can make this ring isomorphism
     degree-preserving by shifting the degrees of all the
     elements in $H^*(Y,P_J)$ up by $|J|$ for every $J\subseteq [m]$.
    \end{cor}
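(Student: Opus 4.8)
The plan is to obtain this as the special case of Theorem~\ref{Thm:Main-Dn-Sn-1} in which every $n_j$ equals $1$, so that $\big(D^{n_j+1},S^{n_j},a_j\big)=(D^2,S^1,a_j)$ for all $j\in[m]$. The first step is to identify the auxiliary set $J_0=\{\,j\in[m]\mid n_j=0\,\}$ appearing in the proof of Theorem~\ref{Thm:Main-Dn-Sn-1}: here $J_0=\varnothing$, and consequently $N_{J\backslash J_0}=N_J=\sum_{j\in J}n_j=|J|$ for every $J\subseteq[m]$.

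The second step is to check that the definition of the product $\Cup^{(\mathbb{D},\mathbb{S})}$ on $\mathcal{R}^*_{Y,\mathcal{P}}=\bigoplus_{J\subseteq[m]}H^*(Y,P_J)$ specializes to the product $\Cup$ in the statement. Since $n_j=1\geq 1$ for every $j$, the clause ``$J\cap J'\neq\varnothing$ but $n_j=0$ for all $j\in J\cap J'$'' never occurs, and the clause ``$J\cap J'\neq\varnothing$ and there exists $n_j\geq 1$ for some $j\in J\cap J'$'' reduces to simply ``$J\cap J'\neq\varnothing$''. Hence $\Cup^{(\mathbb{D},\mathbb{S})}$ is the relative cup product when $J\cap J'=\varnothing$ and is trivial when $J\cap J'\neq\varnothing$, which is exactly the product $\Cup$ described in the corollary.

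Combining these two observations with Theorem~\ref{Thm:Main-Dn-Sn-1} gives at once a ring isomorphism $(\mathcal{R}^*_{Y,\mathcal{P}},\Cup)\cong H^*\big((D^2,S^1)^{(Y,\mathcal{P})}\big)$, and the degree-preserving refinement in that theorem says the isomorphism becomes degree-preserving after raising the degrees of all elements of $H^*(Y,P_J)$ by $N_{J\backslash J_0}=|J|$ for each $J\subseteq[m]$. There is no genuine obstacle in this argument: it is pure specialization, and the only points that need care are verifying that the three-case definition of $\Cup^{(\mathbb{D},\mathbb{S})}$ really does degenerate to the two cases stated here and that the degree-shift formula collapses to $|J|$, both of which are immediate once one notes that $J_0=\varnothing$.
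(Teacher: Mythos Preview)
Your proposal is correct and matches the paper's approach exactly: the paper presents this corollary (together with Corollary~\ref{Cor:D1-S0}) immediately after Theorem~\ref{Thm:Main-Dn-Sn-1} with no separate proof, treating it as the evident specialization to $n_j=1$ for all $j$. Your explicit verification that $J_0=\varnothing$, that the three-case definition of $\Cup^{(\mathbb{D},\mathbb{S})}$ collapses to the stated two cases, and that the degree shift $N_{J\backslash J_0}$ becomes $|J|$, simply fills in the details the paper leaves implicit.
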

    
     \begin{cor} \label{Cor:D1-S0}   
   For any panel structure $\mathcal{P}=\{P_j\}_{j\in [m]}$ on a finite CW-complex $Y$, there is a graded ring isomorphism from $\big( \mathcal{R}^*_{Y,\mathcal{P}}, \cup \big)$ to the integral cohomology ring of $(D^1,S^0)^{(Y,\mathcal{P})}$, where the $\cup$ on $\mathcal{R}^*_{Y,\mathcal{P}}$ is the relative cup product
    $$ H^*(Y,P_J) \otimes H^{*}(Y,P_{J'}) \overset{\cup}{\longrightarrow}
    H^{*}(Y,P_{J\cup J'}), \ \forall J,J'\subseteq [m].$$ 
    \end{cor}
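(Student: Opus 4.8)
The plan is to deduce Corollary~\ref{Cor:D1-S0} directly from Theorem~\ref{Thm:Main-Dn-Sn-1} by specializing that theorem to the sequence $(\mathbb{D},\mathbb{S}) = \{(D^1, S^0, a_j)\}_{j=1}^m$, i.e.\ by taking $n_j = 0$ for every $j \in [m]$. Theorem~\ref{Thm:Main-Dn-Sn-1} then already supplies a ring isomorphism from $(\mathcal{R}^*_{Y,\mathcal{P}}, \Cup^{(\mathbb{D},\mathbb{S})})$ onto the integral cohomology ring of $(D^1,S^0)^{(Y,\mathcal{P})}$, so the only remaining work is to recognise that, for this choice of $n_j$, the product $\Cup^{(\mathbb{D},\mathbb{S})}$ is the plain relative cup product on all summands and that the degree-shift of Theorem~\ref{Thm:Main-Dn-Sn-1} is vacuous.

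For the first point I would use the set $J_0 = \{j \in [m] : n_j = 0\}$ from the proof of Theorem~\ref{Thm:Main-Dn-Sn-1}, which here is all of $[m]$. Hence for arbitrary $J, J' \subseteq [m]$ the alternative ``$J \cap J' \neq \varnothing$ and $n_j \geq 1$ for some $j \in J \cap J'$'' is never met, so by definition $\Cup^{(\mathbb{D},\mathbb{S})}$ coincides on every $H^*(Y,P_J) \otimes H^*(Y,P_{J'})$ with the relative cup product $H^*(Y,P_J) \otimes H^*(Y,P_{J'}) \to H^*(Y,P_{J\cup J'})$; this is exactly the product $\cup$ of the statement. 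For the grading, the degree-shift of Theorem~\ref{Thm:Main-Dn-Sn-1} raises degrees in $H^*(Y,P_J)$ by $N_{J\backslash J_0} = \sum_{j\in J\backslash J_0} n_j$, and since $J \backslash J_0 = \varnothing$ for every $J$ this shift is zero. Therefore the isomorphism of Theorem~\ref{Thm:Main-Dn-Sn-1} is already graded with respect to the cohomological grading on $\mathcal{R}^*_{Y,\mathcal{P}} = \bigoplus_{J\subseteq[m]} H^*(Y,P_J)$, which yields the desired graded ring isomorphism.

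I do not expect a genuine obstacle here, as the argument is just an unwinding of definitions; the one point to be slightly careful about is the identification $\mathrm{colim}(\widehat{\mathbf{D}}^J_{(\mathbb{D},\mathbb{S})+}) \simeq Y/P_J$, which comes from $Y/P_J \wedge \bigwedge_{j\in J} S^0 \cong Y/P_J$ (using $X \wedge S^0 \cong X$), together with the verification that under it the product $\circledast$ of~\eqref{Equ:Ring-Structure-2} becomes the induced product $\widetilde{\cup}$ of Lemma~\ref{Lem:Induced-Cup-Prod}. This is precisely what cases (ii) and (iii) of the proof of Theorem~\ref{Thm:Main-Dn-Sn-1} establish when $J_0 = [m]$: there all the sphere factors are $S^0$, so the formula $u \circledast v = (\phi\,\widetilde{\cup}\,\phi') \times \iota^{(J\cup J')\backslash J_0}_{(\mathbb{D},\mathbb{S})}$ degenerates to $u \circledast v = \phi\,\widetilde{\cup}\,\phi'$, and passing between $\widetilde{\mathcal{R}}^*_{Y,\mathcal{P}}$ and $\mathcal{R}^*_{Y,\mathcal{P}}$ as in Theorem~\ref{Thm:Main-Dn-Sn-1} transfers the conclusion to $\mathcal{R}^*_{Y,\mathcal{P}}$ itself.
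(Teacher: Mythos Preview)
Your proposal is correct and matches the paper's approach exactly: the paper simply states that Corollary~\ref{Cor:D1-S0} follows from Theorem~\ref{Thm:Main-Dn-Sn-1} by taking all $n_j=0$, and your unwinding of why $\Cup^{(\mathbb{D},\mathbb{S})}$ reduces to the relative cup product and why the degree-shift vanishes is precisely the intended specialization.
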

    
    Note that here we have an authentic graded ring isomorphism (without the sign factors) from $\big( \mathcal{R}^*_{Y,\mathcal{P}}, \cup \big)$ to 
    the integral cohomology ring of $(D^1,S^0)^{(Y,\mathcal{P})}$ (see the remark of~\cite[Corollary 4.10]{Yu20}).
    
    \vskip .2cm
  
  \subsection{Equivariant Cohomology of $(D^2,S^1)^{(Y,\mathcal{P})}$ and $(D^1,S^0)^{(Y,\mathcal{P})}$} \label{Sec:Equiv-Cohom}
  \ \n
    Let $Y$ be a finite CW-complex with a panel structure $\mathcal{P}=\{ P_j \}_{j\in [m]}$. Let
    $$D^2 =\{ z\in \mathbb{C}\,|\, \|z\|\leq 1\}, \ \
     S^1=\partial D^2.$$
     
      The natural action of $S^1$
   on $(D^2,S^1)$ induces a canonical action of $T^m=(S^1)^m$
   on the polyhedral product $(D^2,S^1)^{(Y,\mathcal{P})}$.\n 
    
   The equivariant cohomology of 
   $(D^2,S^1)^{(Y,\mathcal{P})}$, denoted by $H^*_{T^m}\big((D^2,S^1)^{(Y,\mathcal{P})}\big)$, is the cohomology of the \emph{Borel construction} 
     $$  ET^m \times_{T^m} (D^2,S^1)^{(Y,\mathcal{P})} = 
     ET^m \times (D^2,S^1)^{(Y,\mathcal{P})} \big\slash \sim $$
    where $(e,x)\sim (eg,g^{-1}x)$ for any $e\in 
    ET^m$, $x\in (D^2,S^1)^{(Y,\mathcal{P})}$ and $g\in T^m$.
    Here  
    $$ET^m = (ES^1)^m = (S^{\infty})^m.$$ \n
    
    Associated to the Borel construction, there is a canonical fiber bundle
    \begin{equation} \label{Equ:Borel-Fiber}
     (D^2,S^1)^{(Y,\mathcal{P})} \rightarrow ET^m \times_{T^m} (D^2,S^1)^{(Y,\mathcal{P})} \rightarrow BT^m \end{equation}
     where $BT^m = (BS^1)^m = (S^{\infty}\slash S^1)^m=(\mathbb{C}P^{\infty})^m$ is the \emph{classifying space} of $T^m$.\n
     
   By the colimit construction of $(D^2,S^1)^{(Y,\mathcal{P})}$ in~\eqref{Equ:Colimit-Construc}, the Borel construction
    \begin{align*}
  ET^m \times_{T^m} (D^2,S^1)^{(Y,\mathcal{P})} &=  
  \bigcup_{(f, L)\in \mathscr{P}_{(Y,\mathcal{P})}} 
 ET^m \times_{T^m}  (D^2,S^1)^{(f, L)} \\
  &=  \bigcup_{(f, L)\in \mathscr{P}_{(Y,\mathcal{P})}} 
   (S^{\infty}\times_{S^1} D^2, S^{\infty}\times_{S^1} 
   S^1)^{(f, L)} \\
   &= (S^{\infty}\times_{S^1} D^2, S^{\infty}\times_{S^1} 
   S^1)^{(Y,\mathcal{P})}.
\end{align*}
  
    Then by the homotopy equivalence of the pairs
    $$(S^{\infty}\times_{S^1} D^2, S^{\infty}\times_{S^1} 
   S^1) \rightarrow (\mathbb{C}P^{\infty}, * ),$$
   we can derive from~\cite[Corollary 4.5]{BBCG10} that
   there exists a homotopy equivalence
   $$  (S^{\infty}\times_{S^1} D^2, S^{\infty}\times_{S^1} 
   S^1)^{(Y,\mathcal{P})} \simeq (\mathbb{C}P^{\infty}, * )^{(Y,\mathcal{P})}.$$
   
   We call $(\mathbb{C}P^{\infty}, * )^{(Y,\mathcal{P})}$ the \emph{Davis-Januszkiewicz space} of $(Y,\mathcal{P})$, denoted by $\mathcal{DJ}(Y,\mathcal{P})$.
   So the equivariant cohomology ring of $(D^2,S^1)^{(Y,\mathcal{P})}$ with respect to the canonical $T^m$-action
   is isomorphic to the ordinary cohomology ring of $\mathcal{DJ}(Y,\mathcal{P})$. Similarly, we can prove that the equivariant cohomology ring of $(D^1,S^0)^{(Y,\mathcal{P})}$ with respect to the canonical $(\Z_2)^m$-action
   is isomorphic to the ordinary cohomology ring of 
   $(\mathbb{R}P^{\infty}, * )^{(Y,\mathcal{P})}$. \n
   
   To describe the cohomology ring structures 
   of $(\mathbb{C}P^{\infty}, * )^{(Y,\mathcal{P})}$ and 
   $(\mathbb{R}P^{\infty}, * )^{(Y,\mathcal{P})}$, we introduce
   the following definition which generalizes the classical notion of face ring (Stanley-Reisner ring) of a simplicial complex; see Lemma~\ref{Lem:Face-Ring} for an explanation.\n
   
     Let $\mathbf{k}$ denote a commutative ring with  unit.  For any $J\subseteq [m]$, let $R^J_{\mathbf{k}}$ be the subring of
   the polynomial ring $\mathbf{k}[x_1,\cdots, x_m]$ defined by
 \[ 
    R^J_{\mathbf{k}} : = 
      \begin{cases}
   \mathrm{span}_{\mathbf{k}}\{ x^{n_1}_{j_1}\cdots x^{n_s}_{j_s} \,|\,
       n_1>0, \cdots, n_s>0 \} ,  &  \text{if $J=\{ j_1,\cdots, j_s \}\neq \varnothing$}; \\
  \ \ \mathbf{k},  &  \text{if $J=\varnothing$}.
 \end{cases} 
  \]
  
    We can multiply $f(x)\in  R^J_{\mathbf{k}}$ and 
    $f'(x)\in R^{J'}_{\mathbf{k}}$ in 
    $\mathbf{k}[x_1,\cdots, x_m]$ and obtain an element
    $f(x)f'(x)\in R^{J\cup J'}_{\mathbf{k}}$.\n
    
  \begin{defi}[Topological Face Ring of a Space with Faces] \label{Def:Face-Ring-Panel-Struc}
    Suppose $Y$ is a CW-complex with a panel structure $\mathcal{P}=\{ P_j \}_{j\in [m]}$.  For any coefficients ring $\mathbf{k}$, the \emph{topological face ring of $(Y,\mathcal{P})$} over $\mathbf{k}$ is defined to be
     $$ \mathbf{k}(Y,\mathcal{P}) := \bigoplus_{J\subseteq [m]} H^*( P_{\cap J};\mathbf{k})\otimes R^J_{\mathbf{k}}. $$
   The product $\star$ on $\mathbf{k}(Y,\mathcal{P})$ is defined by:
     for any $J,J'\subseteq [m]$, 
     $$ \Big( H^*( P_{\cap J};\mathbf{k})\otimes R^J_{\mathbf{k}} \Big) \otimes
     \Big( H^*( P_{\cap J'};\mathbf{k})\otimes R^{J'}_{\mathbf{k}} \Big) \xlongrightarrow{\,\ \scalebox{0.9}{$\star$}\,\ }
     \Big( H^*( P_{\cap (J\cup J')};\mathbf{k})\otimes R^{J\cup J'}_{\mathbf{k}} \Big) $$
    for any $\phi\in H^*( P_{\cap J};\mathbf{k})$, 
     $\phi'\in H^*( P_{\cap J'};\mathbf{k})$ and
     $f(x)\in R^J_{\mathbf{k}}$, $f'(x)\in R^{J'}_{\mathbf{k}}$, 
    \begin{equation}\label{Equ:Face-Ring-Product}
      (\phi \otimes f(x))\star (\phi'\otimes f'(x)):=
      \big(\kappa^*_{J\cup J', J}(\phi)\cup
   \kappa^*_{J\cup J', J'}(\phi') \big) \otimes f(x)f'(x)
   \end{equation}
    where $\kappa_{I', I} : P_{\cap I'}\rightarrow P_{\cap I}$ is the inclusion map for any subsets $I\subseteq I'\subseteq [m]$ and $\kappa^*_{I', I}:
    H^*( P_{\cap I};\mathbf{k}) \rightarrow H^*( P_{\cap I'};\mathbf{k})$ is the induced homomorphism on 
    cohomology.
  \end{defi}
  
  In addition, we can consider $\mathbf{k}(Y,\mathcal{P})$ as a graded ring if we choose a degree for every indeterminate $x_j$
  in $\mathbf{k}[x_1,\cdots, x_m]$ and define
  $$\mathrm{deg}\big(\phi\otimes (x^{n_1}_{j_1}\cdots x^{n_s}_{j_s})\big) = \mathrm{deg}(\phi) + n_1\mathrm{deg}(x_{j_1})+\cdots + n_s\mathrm{deg}(x_{j_s}).$$

    We have the following theorem which generalizes~\cite[Theorem 1.7]{Yu20}.
    
  \begin{thm} \label{Thm:Equivariant-Cohomology-Y-F}
  Suppose $Y$ is a finite CW-complex and
  $\mathcal{P}=\{ P_j \}_{j\in [m]}$ is a panel structure on $Y$. Then the equivariant cohomology ring of $(D^2,S^1)^{(Y,\mathcal{P})}$ (or $(D^1,S^0)^{(Y,\mathcal{P})}$)  with $\Z$-coefficients (or $\Z_2$-coefficients) with respect to the canonical $(S^1)^m$-action  (or $(\Z_2)^m)$-action) is isomorphic as a graded ring to the topological face ring $\Z(Y,\mathcal{P})$ (or $\Z_2(Y,\mathcal{P})$) of $(Y,\mathcal{P})$ by choosing
    $\mathrm{deg}(x_j)=2$ (or $\mathrm{deg}(x_j)=1$) for all $1\leq j \leq m$.
  \end{thm}
  \begin{proof}
  The proof is parallel to the proof of~\cite[Theorem 1.7]{Yu20}. 
   \end{proof}

   From the canonical fiber bundle associated to the Borel construction in~\eqref{Equ:Borel-Fiber}, we 
   have a natural $H^*(BT^m)$-module structure on
   $H^*_{T^m}\big((D^2,S^1)^{(Y,\mathcal{P})}\big)$. By the identification
   $H^*(BT^m) = \Z[x_1,\cdots, x_m]$,
    we can write the $H^*(BT^m)$-module structure on $H^*_{T^m}\big((D^2,S^1)^{(Y,\mathcal{P})}\big)$
   as: for each $1\leq i \leq m$,
   \begin{equation} \label{Equ-BT-module-struc}
    x_i \cdot (\phi \otimes f(x)) = (1\otimes x_i)\star (\phi \otimes f(x)) \overset{\eqref{Equ:Face-Ring-Product}}{=} 
   \kappa^*_{J\cup\{i\}, J}(\phi) \otimes x_i f(x)
\end{equation}
   where $\phi\in  H^*(P_{\cap J})$ and
   $f(x)\in R^J_{\Z}$, $J\subseteq [m]$. 
  
   \vskip .4cm
   
 \section{Reexamining moment-angle complexes and
   polyhedral products over simplicial complexes} \label{Sec:Reexam-Simplical-Complex}
   
    In this section, we reexamine the constructions of  
   (generalized) moment-angle complexes and polyhedral products
   over simplicial complexes from
   the viewpoint of panel structures. We will
   reinterpret some known results using our theorems 
    in the last three sections and obtain a few new results. 
  \n
  
   Let $K$ be a simplicial complex 
      on $[m]=\{1,2,\cdots,m\}$.
      We assume that the set $[m]$ is \emph{minimal} for $K$ in the sense that
   every $j\in [m]$ belongs to at least one simplex of $K$. We also use $v_1,\cdots, v_m$ to refer to the vertices ($0$-dimensional simplex) of $K$. We do not distinguish $K$ and
   its geometrical realization below.\n
      \begin{itemize}
      \item For any $J\subseteq [m]$, let $K_J$ denote the \emph{full subcomplex} of $K$ consisting of all simplices of $K$ which have all of their vertices in $J$, i.e. 
   $$  K_J = \{ \sigma\cap J\,|\, \sigma \in K \}.$$
   
  \item Let $K'$ denote the barycentric subdivision of $K$. We can consider
    $K'$
   as the set of chains of simplices in $K$ ordered by inclusions.  For each nonempty simplex
   $\sigma\in K$, let $F_{\sigma}$ denote the geometrical realization of the poset
   $$K_{\geq \sigma}=\{ \tau\in K\,|\, \sigma\subseteq \tau \}.$$
    Thus,  
    $F_{\sigma}$
   is the subcomplex of $K'$ consisting of all simplices of the form
   $$\sigma=\sigma_0 \subsetneq \sigma_1 \subsetneq \cdots \subsetneq \sigma_l.$$ 
   It is clear that $F_{\sigma}$ is contractible.
 \n
 
 \item   Let $Y^K = u_0K'$ denote the cone on $K'$ over a point $u_0$ (see Figure~\ref{p:Subdivision-1}). Notice that $Y_K=F_{\varnothing}$.\n
         
     \item   There is a natural panel structure $\mathcal{P}$ on $Y^K$ defined by:
    \begin{equation*}
     \mathcal{P} = \{ P_j = F_{v_j}\}_{j\in [m]}.
   \end{equation*}
   \end{itemize}
   
   \begin{figure}
         \includegraphics[width=0.6\textwidth]{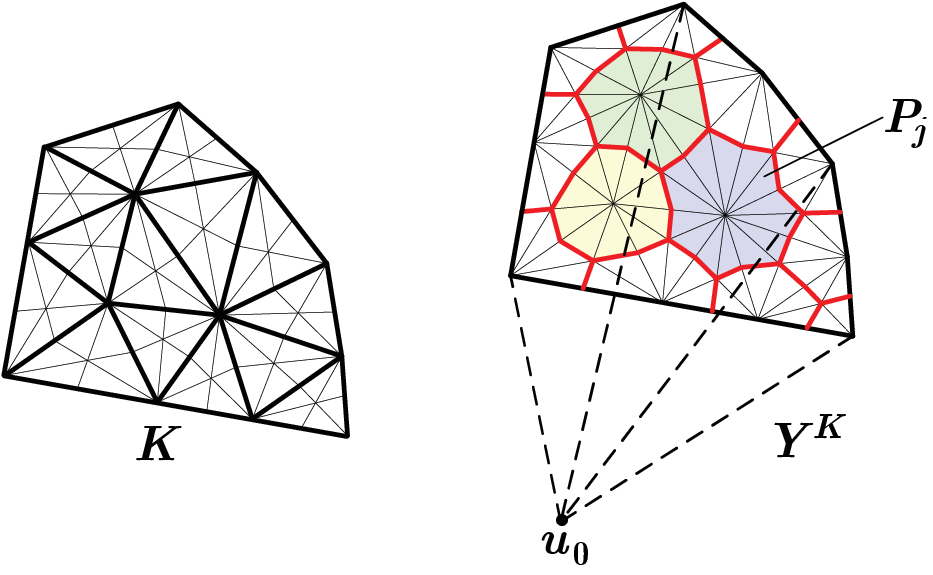}\\
          \caption{The cone of the barycentric subdivision of $K$}\label{p:Subdivision-1}
   \end{figure}
  
  This panel structure $\mathcal{P}$ on $Y^K$ has been studied in~\cite[p.314]{Da83} and~\cite[p.428]{DaJan91}.    
   Then by definition, for any $J\subseteq [m]$,
   \begin{itemize}
    \item If $J\neq \varnothing$, we have
    
    $ P_{\cap J} =\bigcap_{j\in J} F_{v_j}=   \begin{cases}
   F_{\sigma} ,  &  \text{if $J$ is the vertex set of a simplex $\sigma\in K$}; \\
   \varnothing ,  &  \text{otherwise}.
 \end{cases}$.\\
   If $J=\varnothing$, $P_{\cap \varnothing}=Y^K=F_{\varnothing}$. So all the faces of $(Y^K,\mathcal{P})$ are $\{ F_{\sigma}\,|\, \sigma\in K\}$.
       The space $Y^K$ together
    with its faces is called a \emph{simple polyhedral complex} in~\cite[p.428]{DaJan91}.\n
    
    \item For any $\sigma \in K$, the panel index of the face $F_{\sigma}$ in $(Y^K,\mathcal{P})$ is exactly $\sigma\subseteq [m]$, i.e. $I_{F_{\sigma}} =\sigma$. 
   \n
      
     \item $P_{J}$ is homotopy equivalent to the full subcomplex $K_J$ of $K$. This is because every $P_j$ and
     any of their intersections 
     $P_{\cap J}$ are contractible (if not empty), so
     $K_J$ can be thought of as the nerve complex of $P_{J}$ covered by $\{ P_j \}_{j\in J}$ (see~\cite[Corollary\,4G.3]{Hatcher02}). 
    Recall that the \emph{nerve complex} of a cover of a set $X$ 
    by a family of subsets $\{U_i\}_{1\leq i\leq n}$ is a 
    simplicial complex with $n$ vertices whose $(k-1)$-dimensional simplices correspond to the nonempty intersections of $k$ different $U_i$ in the cover. \n

  \end{itemize}
    
  \begin{lem} \label{Lem:Homotopy-1}
   For any $(\mathbb{X},\mathbb{A})= \{ (X_j,A_j,a_j) \}^m_{j=1}$, there is a homotopy equivalence 
   $$ (\mathbb{X},\mathbb{A})^{(Y^K,\mathcal{P})}
   \simeq (\mathbb{X},\mathbb{A})^{K}.$$
  \end{lem}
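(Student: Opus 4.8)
The plan is to realize the evident projection
$\pi\colon (\mathbb{X},\mathbb{A})^{(Y^K,\mathcal{P})}\to(\mathbb{X},\mathbb{A})^{K}$ (the restriction of
$Y^K\times\prod_{j\in[m]}X_j\to\prod_{j\in[m]}X_j$) as the colimit of a map of diagrams over
$\mathscr{P}_{(Y^K,\mathcal{P})}$ each component of which is a homotopy equivalence, and then to invoke
Theorem~\ref{Thm:Homotopy-Colimit-Lemma}. First I would check that $\pi$ is well defined and surjective onto
$(\mathbb{X},\mathbb{A})^{K}$: since $I_{F_\sigma}=\sigma$, the piece
$(\mathbb{X},\mathbb{A})^{F_\sigma}=F_\sigma\times\prod_{j\in\sigma}X_j\times\prod_{j\in[m]\backslash\sigma}A_j$
projects onto $(\mathbb{X},\mathbb{A})^{\sigma}$, and because every $F_\sigma$ is nonempty (indeed contractible),
a point $\underline{x}\in\prod_{j}X_j$ lies in the image exactly when $\{\,j:x_j\notin A_j\,\}$ is contained in
some simplex of $K$, i.e.\ exactly when $\underline{x}\in(\mathbb{X},\mathbb{A})^{K}$.

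Next I would introduce the auxiliary diagram $\mathbf{E}\colon\mathscr{P}_{(Y^K,\mathcal{P})}\to CW$ given by
$\mathbf{E}((F_\sigma,L)):=(\mathbb{X},\mathbb{A})^{\sigma\backslash L}=\prod_{j\in\sigma\backslash L}X_j\times\prod_{j\in[m]\backslash(\sigma\backslash L)}A_j$,
with all structure maps the natural inclusions (legitimate because $(F_\sigma,L)\le(F_\tau,L')$ forces
$\tau\backslash L'\subseteq\sigma\backslash L$). Forgetting the $F_\sigma$-coordinate defines a map of diagrams
$\mathbf{D}_{(\mathbb{X},\mathbb{A})}\to\mathbf{E}$, namely $F_\sigma\times(\mathbb{X},\mathbb{A})^{\sigma\backslash L}\to(\mathbb{X},\mathbb{A})^{\sigma\backslash L}$,
which commutes with the inclusions and, since $F_\sigma$ is contractible, is a homotopy equivalence for every
$(F_\sigma,L)$. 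By \eqref{Equ:Colimit-Construc} the colimit of $\mathbf{D}_{(\mathbb{X},\mathbb{A})}$ is
$(\mathbb{X},\mathbb{A})^{(Y^K,\mathcal{P})}$, while the colimit of $\mathbf{E}$ is the union
$\bigcup_{(F_\sigma,L)}(\mathbb{X},\mathbb{A})^{\sigma\backslash L}$ inside $\prod_j X_j$; here one uses the combinatorial
fact that $\{\,\sigma\backslash L:\sigma\in K,\ L\subseteq\sigma\,\}$ is precisely the set of simplices of $K$ (because $K$ is
closed under passing to subsets), so this union is exactly $(\mathbb{X},\mathbb{A})^{K}$, and the colimit of the map of
diagrams is $\pi$.

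Finally I would verify the hypotheses of Theorem~\ref{Thm:Homotopy-Colimit-Lemma}: $\mathscr{P}_{(Y^K,\mathcal{P})}$ is
finite, and for each $p$ the maps $\mathrm{colim}_{q>p}\mathbf{D}_{(\mathbb{X},\mathbb{A})}(q)\hookrightarrow\mathbf{D}_{(\mathbb{X},\mathbb{A})}(p)$
and $\mathrm{colim}_{q>p}\mathbf{E}(q)\hookrightarrow\mathbf{E}(p)$ are inclusions of CW-subcomplexes, hence closed
cofibrations. (Strictly one should either adjoin disjoint base-points or appeal to the unbased version of the
homotopy colimit lemma, which is harmless since only an unbased homotopy equivalence is claimed.) The theorem then
yields that $\pi\colon(\mathbb{X},\mathbb{A})^{(Y^K,\mathcal{P})}\to(\mathbb{X},\mathbb{A})^{K}$ is a homotopy equivalence.

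The step requiring the most care is the identification of $\mathrm{colim}(\mathbf{E})$ with $(\mathbb{X},\mathbb{A})^{K}$: one
must confirm both that the rewrite $(F_\sigma,L)\mapsto\sigma\backslash L$ exhausts $K$ and that the colimit topology on
$\bigcup_{(F_\sigma,L)}(\mathbb{X},\mathbb{A})^{\sigma\backslash L}$ coincides with the subspace topology from $\prod_j X_j$ —
which holds because there are only finitely many of these subcomplexes and each is closed. Everything else is formal
bookkeeping with the poset $\mathscr{P}_{(Y^K,\mathcal{P})}$.
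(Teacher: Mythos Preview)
Your argument is correct and follows the same strategy as the paper's: realize both spaces as colimits of diagrams, collapse the contractible $F_\sigma$-factor to obtain a map of diagrams that is a pointwise homotopy equivalence, and invoke Theorem~\ref{Thm:Homotopy-Colimit-Lemma}. The only real difference is the indexing poset. The paper works directly over $K$, writing $(\mathbb{X},\mathbb{A})^{(Y^K,\mathcal{P})}=\underset{\sigma\in K}{\mathrm{colim}}\,(\mathbb{X},\mathbb{A})^{F_\sigma}$ and $(\mathbb{X},\mathbb{A})^{K}=\underset{\sigma\in K}{\mathrm{colim}}\,(\mathbb{X},\mathbb{A})^{\sigma}$, which keeps the proof to a few lines. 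You instead use the finer poset $\mathscr{P}_{(Y^K,\mathcal{P})}$ already set up in Section~\ref{Sec:Stable-Decomp}; this has the advantage that every structure map in both $\mathbf{D}_{(\mathbb{X},\mathbb{A})}$ and your auxiliary diagram $\mathbf{E}$ is an honest inclusion of CW-subcomplexes, so the cofibration hypotheses of Theorem~\ref{Thm:Homotopy-Colimit-Lemma} are immediate. The price is the extra (easy) identification $\{\sigma\setminus L:\sigma\in K,\ L\subseteq\sigma\}=K$, giving $\mathrm{colim}(\mathbf{E})=(\mathbb{X},\mathbb{A})^{K}$. Both routes are sound; yours trades brevity for making the diagram maps fully explicit.
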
  
    \begin{proof}
    Since the panel index of a face $F_{\sigma}$ in $(Y^K,\mathcal{P})$ is exactly $\sigma\subseteq [m]$, 
  the poset $\mathscr{P}_{(Y^K,\mathcal{P})}$ (see~\eqref{Equ:poset-Y-P}) determined by $(Y^K,\mathcal{P})$ is isomorphic to the following poset
 $$ \mathscr{P}_{K}:=\{ (\sigma, \tau)\,|\, \tau\subseteq \sigma \in K \}$$
 where $(\sigma,\tau)\leq (\sigma',\tau')$ if and only if $\sigma\subseteq \sigma'$ and $\sigma\backslash \tau \supseteq \sigma'\backslash \tau'$. Let 
 $$ 
  (\mathbb{X},\mathbb{A})^{(\sigma,\tau)} :=
   \prod_{j\in \sigma\backslash\tau} X_j \times \prod_{j\in [m]\backslash (\sigma\backslash\tau)} A_j, \ \ (\mathbb{X},\mathbb{A})^{(F_{\sigma},\tau)} :=
  F_{\sigma} \times \prod_{j\in \sigma\backslash\tau} X_j \times \prod_{j\in [m]\backslash (\sigma\backslash\tau)} A_j.$$
   
    Then we can think of both $(\mathbb{X},\mathbb{A})^{K}$ and $(\mathbb{X},\mathbb{A})^{(Y^K,\mathcal{P})}$ as colimits of diagrams over $\mathscr{P}_K$:
   $$ (\mathbb{X},\mathbb{A})^K = \bigcup_{\sigma\in K} \prod_{j\in\sigma} X_j \times \prod_{j\in [m]\backslash \sigma} A_j=
      \underset{(\sigma,\tau)\in \mathscr{P}_{K}}{\mathrm{colim}}\,  (\mathbb{X},\mathbb{A})^{(\sigma,\tau)},$$
      $$ (\mathbb{X},\mathbb{A})^{(Y^K,\mathcal{P})}=
      \bigcup_{\sigma\in K}  
       F_{\sigma} \times \prod_{j\in \sigma} X_j \times \prod_{j\in [m]\backslash \sigma} A_j =
      \underset{(\sigma,\tau)\in \mathscr{P}_{K}}{\mathrm{colim}} \, (\mathbb{X},\mathbb{A})^{(F_{\sigma},\tau)}. $$
  
 Moreover, there is an obvious map of diagrams over $\mathscr{P}_K$ defined by:
    $$  (\mathbb{X},\mathbb{A})^{(F_{\sigma},\tau)}  \longrightarrow
      (\mathbb{X},\mathbb{A})^{(\sigma,\tau)}, \ (\sigma,\tau)\in \mathscr{P}_K. $$
    Since $F_{\sigma}$ is contractible for all $\sigma\in K$ (including $\sigma=\varnothing$), the space $ (\mathbb{X},\mathbb{A})^{(Y^K,\mathcal{P})}$ is homotopy equivalent to $(\mathbb{X},\mathbb{A})^{K}$ by~\cite[Corollary 4.5]{BBCG10}. 
 \end{proof}
 
  For $(D^2,S^1)$ (or $(D^1,S^0)$), we can see that the homotopy equivalence in the above proof 
 from $(D^2,S^1)^{(Y^K,\mathcal{P})}$ (or $(D^1,S^0)^{(Y^K,\mathcal{P})}$) to $(D^2,S^1)^{K}$ (or $(D^1,S^0)^K$) is equivariant with respect to the canonical $(S^1)^m$-actions (or $(\Z_2)^m$-actions). \n

 So by Lemma~\ref{Lem:Homotopy-1}, we can compute the stable decomposition, cohomology ring and equivariant cohomology ring of $(\mathbb{X},\mathbb{A})^{K}$ in terms of 
 $(\mathbb{X},\mathbb{A})^{(Y^K,\mathcal{P})}$. 
  Next, we explain how to derive some known results of $(\mathbb{X},\mathbb{A})^{K}$ by applying our theorems to
  $(\mathbb{X},\mathbb{A})^{(Y^K,\mathcal{P})}$.

   \subsection{$(\mathbb{X},\mathbb{A})^{K}$
     where every $X_j$ in $(\mathbb{X},\mathbb{A})= \{ (X_j,A_j,a_j) \}^m_{j=1}$ is contractible} \label{Subsec:X_j-Contract}
     \ \n   
        
     We can derive a homotopy equivalence from Theorem~\ref{Thm:Main-General-X-Contract} for $(\mathbb{X},\mathbb{A})^{(Y^K,\mathcal{P})}$:
      \begin{equation*}  
         \mathbf{\Sigma}\big( (\mathbb{X},\mathbb{A})^{(Y^K,\mathcal{P})} \big) \simeq \bigvee_{J\subseteq [m]} 
          \mathbf{\Sigma}\Big(  ( Y^K\slash P_J ) \wedge  \bigwedge_{j\in J} A_j \Big).
      \end{equation*} 
      
      Since $P_J$ is homotopy equivalent to $K_J$, 
      \begin{equation} \label{Equ:Homotopy-Equiv-Yk-PJ}
         Y^K\slash P_J = u_0K\slash P_J \simeq u_0 P_J\slash P_J \simeq \mathbf{\Sigma}(P_J) \simeq \mathbf{\Sigma}(K_J).
       \end{equation}
      
   In the following if $J\subseteq [m]$ is the vertex set of a simplex in $K$, we simply write $J\in K$ by abuse of notation; otherwise we write $J\notin K$.
 \n
   If $J$ is the vertex set of a simplex $\sigma\in K$,
    then $K_J= \sigma$, which implies that $Y^K\slash P_J $ is contractible by~\eqref{Equ:Homotopy-Equiv-Yk-PJ}.
      Then since the smash product of a contractible space with any space is
      contractible, we obtain 
      \begin{align}  
   \ \ \     \mathbf{\Sigma}\big( (\mathbb{X},\mathbb{A})^{(Y^K,\mathcal{P})} \big) & \simeq \bigvee_{J\notin K, J\subseteq [m]} 
               \mathbf{\Sigma}\Big(  \mathbf{\Sigma}(K_J) \wedge  \bigwedge_{j\in J} A_j \Big)  \nonumber \\
   &\simeq \bigvee_{J\notin K, J\subseteq [m]} 
          \mathbf{\Sigma}(K_J) *  \bigwedge_{j\in J} A_j  \simeq \bigvee_{J\notin K, J\subseteq [m]} 
             \mathbf{\Sigma} \Big( K_J  * \bigwedge_{j\in J} A_j \Big) \label{Equ:Join-Smash}
      \end{align}       
     where
 $*$ denotes the join of two spaces. 
 The homotopy equivalences in~\eqref{Equ:Join-Smash} are due to the standard fact that for any based spaces
 $X$ and $Y$, 
  \[  \mathbf{\Sigma}(X)\wedge Y \simeq \mathbf{\Sigma}(X\wedge Y) \simeq X * Y. \]
 
 Note that the stable decomposition in~\eqref{Equ:Join-Smash}  recovers~\cite[Theorem\,2.21]{BBCG10}. 
 In particular, for 
 $(D^2,S^1)^K$ we recover the following result in~\cite[Corollary\,2.23]{BBCG10}:
    \begin{equation}  
      \mathbf{\Sigma}((D^2,S^1)^K) \simeq   \mathbf{\Sigma}\big( (D^2,S^1)^{(Y^K,\mathcal{P})} \big) \simeq \bigvee_{J\notin K, J\subseteq [m]} 
               \mathbf{\Sigma}^{|J|+2}(K_J). 
      \end{equation} 
  Moreover, for $(D^1,S^0)^K$, we obtain the following new result.
  \begin{prop}  \label{Prop:Stable-Decomp-RZ-K}
     $ \mathbf{\Sigma}((D^1,S^0)^K) \simeq   \mathbf{\Sigma}\big( (D^1,S^0)^{(Y^K,\mathcal{P})} \big) \simeq \bigvee_{J\notin K, J\subseteq [m]} 
               \mathbf{\Sigma}^{2}(K_J)$.
  \end{prop} 
   \vskip .2cm
        
       \subsection{$(\mathbb{X},\mathbb{A})^{K}$
     where every $A_j$ in $(\mathbb{X},\mathbb{A})= \{ (X_j,A_j,a_j) \}^m_{j=1}$ is contractible}
     \label{Subsec:A_j-Contract}
     \ \n     
     We can derive a homotopy equivalence from
      Theorem~\ref{Thm:Stable-Decomp-A-Contract}
   for $(\mathbb{X},\mathbb{A})^{(Y^K,\mathcal{P})}$:
   \[ \qquad \ S^1\vee \mathbf{\Sigma}\big( (\mathbb{X},\mathbb{A})^{(Y^K,\mathcal{P})} \big) \simeq \bigvee_{J\subseteq [m]} 
  \mathbf{\Sigma}\Big( \big( P_{\cap J}\cup y_0\big) \wedge  \bigwedge_{j\in J}  X_j \Big).\]
   \begin{itemize}
      \item If $J\neq \varnothing$ and $J\notin K$, then $P_{\cap J}$ is empty, which implies that the space
      $\big( P_{\cap J}\cup y_0\big) \wedge  \bigwedge_{j\in J}  X_j $ is contractible.\n
      
     \item If $J\neq \varnothing$ and $J\in K$ (suppose $J$ is the vertex set of $\sigma$), then $P_{\cap J} = F_{\sigma}$ is contractible, which implies that $\big( P_{\cap J}\cup y_0\big) \wedge  \bigwedge_{j\in J}  X_j \simeq \bigwedge_{j\in J}  X_j$.\n
      \item If $J= \varnothing$, then 
       $\big( P_{\cap J}\cup y_0\big) \wedge  \bigwedge_{j\in J}  X_j  = Y^K\cup y_0$.\nn
     \end{itemize}
     
     From the above discussion, we obtain 
      \[ \qquad \ S^1\vee \mathbf{\Sigma}\big( (\mathbb{X},\mathbb{A})^{(Y^K,\mathcal{P})} \big) \simeq  \mathbf{\Sigma}(Y^K\cup y_0) \vee 
      \bigvee_{\varnothing \neq J\in K} 
  \mathbf{\Sigma}\Big(\bigwedge_{j\in J}  X_j \Big).\]

  Then since $Y^K = u_0K'$ is contractible, 
  $\mathbf{\Sigma}(Y^K\cup y_0)\simeq S^1$ implies
   \begin{equation} \label{Equ:Homotopy-Equiv-A}
   \mathbf{\Sigma}\big( (\mathbb{X},\mathbb{A})^{(Y^K,\mathcal{P})} \big) \simeq   \bigvee_{\varnothing \neq J\in K} 
  \mathbf{\Sigma}\Big(\bigwedge_{j\in J}  X_j \Big). 
  \end{equation} 
     
     Note that the stable decomposition in~\eqref{Equ:Homotopy-Equiv-A} recovers~\cite[Theorem\,2.15]{BBCG10}.
            \vskip .2cm
              
     \subsection{Cohomology rings of $(D^2,S^1)^K$ and $(D^1,S^0)^K$} \label{Subsec:D2S1-Cohomology} \ \n
    
     The cohomology rings of $(D^2,S^1)^K$ and 
     $(D^2,S^1)^{(Y^K,\mathcal{P})}$ are isomorphic since they are homotopy equivalent. So by Corollary~\ref{Cor:D2-S1}, we obtain the following proposition.
     
      \begin{prop} \label{Prop:Moment-Angle} 
      There is a ring isomorphism (up to a sign) from
  $\big( \mathcal{R}^*_{Y^K,\mathcal{P}}, \Cup \big)$
  to the integral cohomology ring of $(D^2,S^1)^{K}$.
  Moreover, we can make this isomorphism degree-preserving by
  shifting the degrees of all the elements in $ H^*(Y^K,P_J)$ up by $|J|$ for every
  $J\subseteq [m]$.
  \end{prop}
  
  Since $Y^K$ is contractible, for any subset $J\subseteq [m]$,  
  we have an isomorphism
   $$ \Xi_J : H^q(Y^K,P_J)
   \xlongrightarrow{\cong} \widetilde{H}^{q-1}(P_J)  \xlongrightarrow{\cong} \widetilde{H}^{q-1}(K_J),\ \forall q\in \Z.$$ 
   So for any subsets $J,J'\subseteq [m]$, we have a commutative diagram
   \begin{equation}  \label{Equ:Diagram-Induced-Prod}
   \xymatrix{
           H^q(Y^K,P_J) \otimes H^{q'}(Y^K,P_{J'}) \ar[d]_{\cong} \ar[r]^{\quad\ \ \ \ \scalebox{0.85}{$\Cup$}}
                &   H^{q+q'}(Y^K,P_{J\cup J'}) \ar[d]^{\cong}  \\
           \widetilde{H}^{q-1}( P_J) \otimes \widetilde{H}^{q'-1}( P_{J'})\ar[d]_{\cong} \ar[r] & \widetilde{H}^{q+q'-1}(P_{J\cup J'}) \ar[d]^{\cong} \\
          \widetilde{H}^{q-1}( K_J) \otimes \widetilde{H}^{q'-1}( K_{J'}) \ar[r]^{\quad \ \ \  \scalebox{0.85}{$\sqcup$}} & \widetilde{H}^{q+q'-1}(K_{J\cup J'})      
                 }  
            \end{equation}  
      where $\sqcup$ is induced by $\Cup$ through the above isomorphisms $\Xi_J$. More precisely,      
      for any $\phi\in
       \widetilde{H}^*( K_J),\, \phi'\in \widetilde{H}^*( K_{J'})$, we have (see Corollary~\ref{Cor:D2-S1})
       \[ \phi\sqcup\phi' =  \begin{cases}
  \Xi_{J\cup J'} \big( \Xi^{-1}_J(\phi)\cup \Xi^{-1}_{J'}(\phi')\big),  &  \text{if $J\cap J'=\varnothing$}; \\
   \ 0  ,  &  \text{if $J\cap J'\neq \varnothing$}.
 \end{cases}   \]
     
    From the definitions of the isomorphism $\Xi_J$ and the relative cup product $\cup$, we can check that $\sqcup$ is equivalent to the cohomology product 
     of $(D^2,S^1)^K$ given in~\cite[Proposition 3.2.10]{BP15}.\n

       Similarly, by Corollary~\ref{Cor:D1-S0} we obtain 
       a parallel result for $(D^1,S^0)^{K}$
       which gives a new description of
       integral cohomology ring of $(D^1,S^0)^{K}$.
       
  \begin{prop} \label{Prop:Real-Moment-Angle}
  The integral
   cohomology ring of the real moment-angle complex $(D^1,S^0)^{K}$
  is isomorphic to $\big( \mathcal{R}^*_{Y^K,\mathcal{P}}, \cup \big)$ where
  $ \mathcal{R}^*_{Y^K,\mathcal{P}}=\bigoplus_{J \subseteq [m]} H^*(Y^K,P_J)$ and
  $H^*(Y^K,P_J) \otimes H^{*}(Y^K,P_{J'}) \overset{\cup}{\longrightarrow}
    H^{*}(Y^K,P_{J\cup J'})$ 
    is the relative cup product for all $J,J'\subseteq [m]$.\n
    \end{prop}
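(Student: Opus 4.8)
The plan is to obtain Proposition~\ref{Prop:Real-Moment-Angle} as a direct consequence of Corollary~\ref{Cor:D1-S0} and Lemma~\ref{Lem:Homotopy-1}, in complete analogy with the treatment of $(D^2,S^1)^K$ in Subsection~\ref{Subsec:D2S1-Cohomology}. First I would apply Lemma~\ref{Lem:Homotopy-1} to the sequence of pairs $(\mathbb{X},\mathbb{A}) = \{(D^1,S^0,a_j)\}_{j=1}^m$ and the canonical panel structure $\mathcal{P}=\{P_j=F_{v_j}\}_{j\in[m]}$ on $Y^K=u_0K'$; this yields a homotopy equivalence $(D^1,S^0)^{(Y^K,\mathcal{P})}\simeq (D^1,S^0)^K$, and hence an isomorphism of integral cohomology rings $H^*\big((D^1,S^0)^K\big)\cong H^*\big((D^1,S^0)^{(Y^K,\mathcal{P})}\big)$, since the cohomology ring is a homotopy invariant (so no compatibility of the equivalence with any extra structure is needed).

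Next I would invoke Corollary~\ref{Cor:D1-S0} for the space with faces $(Y^K,\mathcal{P})$. Here one only needs to check that $(Y^K,\mathcal{P})$ meets the standing conventions of the paper: $Y^K$ is a finite CW-complex because $K$ has finitely many simplices, each panel $P_j=F_{v_j}$ and each face $F_\sigma$ is a subcomplex of the barycentric subdivision $K'$, and $(Y^K,\mathcal{P})$ has only finitely many faces (one for each simplex of $K$). Corollary~\ref{Cor:D1-S0} then gives a graded ring isomorphism from $\big(\mathcal{R}^*_{Y^K,\mathcal{P}},\cup\big)$ to $H^*\big((D^1,S^0)^{(Y^K,\mathcal{P})}\big)$, where $\mathcal{R}^*_{Y^K,\mathcal{P}}=\bigoplus_{J\subseteq[m]}H^*(Y^K,P_J)$ and $\cup$ is the relative cup product $H^*(Y^K,P_J)\otimes H^*(Y^K,P_{J'})\to H^*(Y^K,P_{J\cup J'})$ for \emph{all} $J,J'\subseteq[m]$. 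Composing this with the isomorphism from the first step gives the claimed isomorphism.

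There is essentially no obstacle here; the statement is a routine specialization. The only point worth a remark is why no triviality condition or degree shift appears, in contrast to Theorem~\ref{Thm:Main-Dn-Sn-1} and Corollary~\ref{Cor:D2-S1}: for $(D^1,S^0)=(D^{0+1},S^0)$ all sphere dimensions $n_j$ vanish, so in the notation of the proof of Theorem~\ref{Thm:Main-Dn-Sn-1} one has $J_0=[m]$ and $N_{J\setminus J_0}=0$ for every $J\subseteq[m]$; consequently the product $\circledast$ collapses to the induced relative cup product $\widetilde{\cup}$ of Lemma~\ref{Lem:Induced-Cup-Prod} with neither a sign twist (case (i) never occurs) nor a suspension shift. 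If desired, one may further rewrite the answer using that $Y^K$ is contractible, so that $H^q(Y^K,P_J)\cong\widetilde{H}^{q-1}(P_J)\cong\widetilde{H}^{q-1}(K_J)$, recovering the more familiar description of $H^*\big((D^1,S^0)^K\big)$ in terms of full subcomplexes as in~\eqref{Equ:Diagram-Induced-Prod}.
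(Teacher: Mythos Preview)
Your proposal is correct and follows essentially the same route as the paper: the paper states the proposition immediately after remarking ``Similarly, by Corollary~\ref{Cor:D1-S0}, we obtain a parallel result for $(D^1,S^0)^{K}$ below,'' having already used Lemma~\ref{Lem:Homotopy-1} at the start of Subsection~\ref{Subsec:D2S1-Cohomology} to identify $(D^2,S^1)^K$ with $(D^2,S^1)^{(Y^K,\mathcal{P})}$. Your additional explanation of why no degree shift or triviality condition arises (since every $n_j=0$ for $(D^1,S^0)$) is a helpful clarification that the paper leaves implicit.
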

    
    \begin{rem}
     An earlier description of the integral cohomology ring of 
         $(D^1,S^0)^{K}$ was given by Cai~\cite{CaiLi17} in a very different way. 
   \end{rem} 
  
    \vskip .2cm
   
    \subsection{The equivariant cohomology rings of $(D^2,S^1)^{K}$ and $(D^1,S^0)^{K}$}  
     \label{Subsec:D2-S1-Equiv-Cohom} \ \n
    
   Since $(D^2,S^1)^{K}$ and $(D^2,S^1)^{(Y^K,\mathcal{P})}$ are equivariantly homotopy equivalent, 
   their equivariant cohomology rings are isomorphic.
  According to Theorem~\ref{Thm:Equivariant-Cohomology-Y-F}, the equivariant cohomology ring of $(D^2,S^1)^{K}$ is isomorphic to the topological face ring of $(Y^K,\mathcal{P})$:
      $$ \Z(Y^K,\mathcal{P}) = \bigoplus_{J\subseteq [m]} H^*( P_{\cap J} )\otimes R^J_{\Z},$$
  where the product $\star$ on $\Z(Y^K,\mathcal{P})$ is defined by (see~\eqref{Equ:Face-Ring-Product}): for any $J,J'\subseteq [m]$, 
     $$ \qquad \ \ \Big( H^*( P_{\cap J} )\otimes R^J_{\Z} \Big) \otimes
     \Big( H^*( P_{\cap J'})\otimes R^{J'}_{\Z} \Big) \xlongrightarrow{\,\ \scalebox{0.9}{$\star$}\,\ }
     \Big( H^*( P_{\cap (J\cup J')})\otimes R^{J\cup J'}_{\Z} \Big) $$   
     $$ \qquad \ (\phi \otimes f(x))\star (\phi'\otimes f'(x)):=
      \big(\kappa^*_{J\cup J', J}(\phi)\cup
   \kappa^*_{J\cup J', J'}(\phi') \big) \otimes f(x)f'(x)   $$
    where $\phi\in H^*( P_{\cap J})$, 
     $\phi'\in H^*( P_{\cap J'})$ and
     $f(x)\in R^J_{\Z}$, $f'(x)\in R^{J'}_{\Z}$.

   \begin{lem} $\Z(Y^K,\mathcal{P})$ is isomorphic to
    the face ring $\Z[K]$.
    \end{lem}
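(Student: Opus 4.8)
The plan is to observe that, because every face of $(Y^K,\mathcal{P})$ — including $F_\varnothing=Y^K$ — is contractible, the topological face ring $\Z(Y^K,\mathcal{P})$ collapses to something manifestly combinatorial, and then to match its $\star$-product with the monomial multiplication of $\Z[K]$ term by term. Concretely, for each $J\subseteq[m]$ we have $P_{\cap J}=F_J$ when $J\in K$ and $P_{\cap J}=\varnothing$ when $J\notin K$, so $H^*(P_{\cap J})$ is $\Z$ concentrated in degree $0$ when $J\in K$ and is $0$ otherwise; moreover for $J\subseteq J'$ both in $K$ the inclusion $\kappa_{J',J}\colon F_{J'}\hookrightarrow F_J$ induces the identity of $\Z$ on $H^0$. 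Hence $\Z(Y^K,\mathcal{P})=\bigoplus_{J\in K}H^0(F_J)\otimes R^J_\Z\cong\bigoplus_{J\in K}R^J_\Z$, and under this identification the product $\star$ of \eqref{Equ:Face-Ring-Product} sends a monomial in $R^J_\Z$ and a monomial in $R^{J'}_\Z$ to their product in $\Z[x_1,\dots,x_m]$ whenever $J\cup J'\in K$, and to $0$ when $J\cup J'\notin K$ (since then $H^*(P_{\cap(J\cup J')})=0$). Here one should note that $J,J'\in K$ automatically when $J\cup J'\in K$, and that the product monomial has support exactly $J\cup J'$, so it lies in $R^{J\cup J'}_\Z$.

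Next I would define $\Phi\colon\Z(Y^K,\mathcal{P})\to\Z[K]$ on basis elements by sending $1\otimes(x^{n_1}_{j_1}\cdots x^{n_s}_{j_s})$, with $J=\{j_1,\dots,j_s\}\in K$ and all $n_i>0$, to the monomial $v^{n_1}_{j_1}\cdots v^{n_s}_{j_s}$, and $1\otimes 1$ (the $J=\varnothing$ summand) to $1$, extended $\Z$-linearly. Comparing with the basis \eqref{Equ:Face-Ring-Basis} of $\Z[K]$, the map $\Phi$ carries the evident $\Z$-basis of $\Z(Y^K,\mathcal{P})$ bijectively onto a $\Z$-basis of $\Z[K]$, so it is an isomorphism of graded abelian groups once we set $\mathrm{deg}(x_j)=\mathrm{deg}(v_j)=2$ (all cohomology factors sit in degree $0$).

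Finally I would check that $\Phi$ is multiplicative. For $J,J'\in K$ and monomials $f\in R^J_\Z$, $f'\in R^{J'}_\Z$: if $J\cup J'\in K$, then $\Phi\big((1\otimes f)\star(1\otimes f')\big)=\Phi(1\otimes ff')$ is the monomial $ff'$ expressed in the $v$'s, which coincides with the product $\Phi(1\otimes f)\cdot\Phi(1\otimes f')$ in $\Z[v_1,\dots,v_m]$, and this element is nonzero in $\Z[K]$ precisely because its support $J\cup J'$ is a simplex of $K$; if $J\cup J'\notin K$, the left-hand side is $\Phi(0)=0$ while the right-hand side is a $v$-monomial whose support $J\cup J'$ is a non-face, hence lies in $\mathcal{I}_K$ and vanishes in $\Z[K]$. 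Thus $\Phi$ is a ring isomorphism, and it is degree-preserving with the stated gradings. I do not expect any serious obstacle here; the only point needing a line of justification is the elementary fact that a monomial lies in $\mathcal{I}_K$ if and only if its support is not a simplex of $K$ — which holds because the non-faces of $K$ are upward closed, so any non-face contains a minimal non-face whose squarefree monomial divides the given monomial.
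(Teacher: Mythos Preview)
Your proof is correct and follows essentially the same approach as the paper's own argument: both reduce $\Z(Y^K,\mathcal{P})$ to $\bigoplus_{\sigma\in K}R^{\sigma}_{\Z}$ using the contractibility of each $F_\sigma$ and the vanishing of $H^*(P_{\cap J})$ for $J\notin K$, then identify the resulting $\star$-product with monomial multiplication in $\Z[K]$ via the basis~\eqref{Equ:Face-Ring-Basis}. Your version is simply more explicit, spelling out the map $\Phi$, the check that $\kappa^*_{J\cup J',J}$ is the identity on $H^0$, and the verification that a monomial lies in $\mathcal{I}_K$ exactly when its support is a non-face.
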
 \label{Lem:Face-Ring}
  \begin{proof} 
   For $J\subseteq [m]$, if $J$ is the vertex set of a simplex $\sigma$ in $K$, then $P_{\cap J} = F_{\sigma}$
    is contractible which implies $H^*( P_{\cap J} )\otimes R^J_{\Z}\cong R^J_{\Z}$. Otherwise $P_{\cap J}$
    is empty and $H^*( P_{\cap J} )\otimes R^J_{\Z}$ vanishes. So we can write
     \begin{equation} \label{Equ:Face-Ring-K}
       \Z(Y^K,\mathcal{P}) \cong \Big( \bigoplus_{\sigma\subseteq [m],\sigma\in K}  R^{\sigma}_{\Z}, \,\star \Big), 
       \end{equation}
     where for any $f(x)\in R^{\sigma}_{\Z}$, $f'(x)\in R^{\sigma'}_{\Z}$ with $\sigma,\sigma'\in K$,
  $$f(x)\star f'(x) =  \begin{cases}
   f(x)f'(x) ,  &  \text{if $\sigma\cup \sigma'\in K$}; \\
   \   0 ,  &  \text{otherwise}.
 \end{cases}$$
 
 By comparing~\eqref{Equ:Face-Ring-K} with the linear basis of $\Z[K]$ given in~\eqref{Equ:Face-Ring-Basis}, we can easily check that $\Z(Y^K,\mathcal{P}) $ is isomorphic to $\Z[K]$. 
 The lemma is proved.
 \end{proof}
 \n
  The above lemma implies that the integral equivariant cohomology ring of $(D^2,S^1)^K$
  is isomorphic to the face ring $\Z[K]$ of $K$.  
  Similarly, we can derive from Theorem~\ref{Thm:Equivariant-Cohomology-Y-F} that the equivariant $\Z_2$-cohomology
  ring of $(D^1,S^0)^K$ is 
  isomorphic to $\Z_2[K]$.
 So we recover the statement of~\cite[Theorem\,4.8]{DaJan91}.\n
  
  From the above discussion, we see that the topological face ring of
  a panel structure is a generalization of 
  the face ring of a simplicial complex from the equivariant cohomology viewpoint.\n

   \subsection{Polyhedral product over the panel structure on $Y^K$ determined by a partition of the vertex set of $K$}
 \ \n
 
  Let $\mathcal{J}=\{ J_1,\cdots, J_k\}$ be a partition of the vertex set $[m]$ of $K$. Then we can define a panel structure
  on $Y^K$ associated to $\mathcal{J}$ by:
    $$\mathcal{P}^{\mathcal{J}} := \big\{ P^{\mathcal{J}}_i = \bigcup_{j\in J_i} F_{v_j}\big\}^k_{i=1}.$$
 
  For any simplex $\sigma\in K$, let $V(\sigma)\subseteq [m]$ be the vertex set of $\sigma$.
   By a similar argument as for $(Y^K,\mathcal{P})$, 
   we can deduce that for any $L \subseteq [k]=\{1,\cdots, k\}$,
   \begin{itemize}
    \item If $L\neq \varnothing$, $P^{\mathcal{J}}_{\cap L}$ is the union of 
    all the $F_{\sigma}$ where $\sigma\in K$ and $V(\sigma)\cap J_i \neq\varnothing$ for all $i\in L$. If $L=\varnothing$,
    $P^{\mathcal{J}}_{\cap \varnothing} = Y^K$.
     \n
    
   \item $P^{\mathcal{J}}_{L}$ is homotopy equivalent to the full subcomplex $K_{J_L}$ of $K$ where 
   $$ J_L = \bigcup_{i\in L} J_i .$$
    
    \end{itemize}

  For any $(\mathbb{X},\mathbb{A})= \{ (X_i,A_i,a_i) \}^k_{i=1}$, we can apply our theorems to compute 
  the stable decomposition, cohomology ring 
   and equivariant cohomology ring of
   $(\mathbb{X},\mathbb{A})^{(Y^K,\mathcal{P}^{\mathcal{J}})}$ under the same conditions on
   $(\mathbb{X},\mathbb{A})$ in the previous subsections. The precise statements are left to the reader.\n
   
  For the pair $(D^2,S^1)$, we can think of $(D^2,S^1)^{(Y^K,\mathcal{P}^{\mathcal{J}})}$ as the quotient space of $(D^2,S^1)^{(Y^K,\mathcal{P})}$ by the canonical action of an
  $(m-k)$-dimensional subtorus of $(S^1)^m =\R^m\slash \Z^m$ determined by
   $$\qquad\quad \ \{ \varepsilon_j - \varepsilon_{j'} \,|\, j, j' \ \text{belong to the same}\ J_i \ \text{for some}\  1\leq i \leq k \}  \subseteq \Z^m.$$
  where $\varepsilon_j = \{0,\cdots, \overset{j}{1}, \cdots ,0\}\in \Z^m$.\n
    
   Similarly, for the pair $(D^1,S^0)$, we can think of $(D^1,S^0)^{(Y^K,\mathcal{P}^{\mathcal{J}})}$ as the quotient space of $(D^1,S^0)^{(Y^K,\mathcal{P})}$ by the canonical action of a subgroup of rank $m-k$ in $(\Z_2)^m$.
\n

 Using Theorem~\ref{Thm:Main-General-X-Contract}, we obtain the stable decomposition of $(D^2,S^1)^{(Y^K,\mathcal{P}^{\mathcal{J}})}$:
   \begin{equation*}  
    (D^2,S^1)^{(Y^K,\mathcal{P}^{\mathcal{J}})}  \simeq \bigvee_{L\subseteq [k]} 
               \mathbf{\Sigma}^{|J|+2}(K_{J_L}).
      \end{equation*} 
  
 Note that this formula recovers~\cite[Theorem\,1.3]{Yu19}.\n

 In addition, for any partition $\mathcal{J}=\{ J_1,\cdots, J_k\}$ of $[m]$ with $k=\dim(K)+1$,
  $(D^2,S^1)^{(Y^K,\mathcal{P}^{\mathcal{J}})}$ and $(D^1,S^0)^{(Y^K,\mathcal{P}^{\mathcal{J}})}$ can be considered as the generalization of
  the \emph{pull-back from the linear model} (see~\cite[Example 1.15]{DaJan91}) in the study of quasitoric manifolds and small covers.\n
  
  \vskip .4cm 
    
 \section{Reexamining moment-angle complexes and
   polyhedral products over simplicial posets}
    \label{Sec:Reexam-Simplicial-Posets}
    
       The notion of moment-angle complex $\mathcal{Z}_{\mathcal{S}}$ associated to a 
    simplicial poset $\mathcal{S}$ is introduced
    by L\"u-Panov~\cite{LuPanov11} as a generalization of the
    moment-angle complex of a simplicial complex.     
        A poset (partially ordered set) $\mathcal{S}$ with the order relation
    $\leq $ is called \emph{simplicial} if it has an initial element
    $\hat{0}$ and for each $\sigma \in \mathcal{S}$ the lower segment
    $[\hat{0},\sigma]=\{ \tau\in \mathcal{S}: \hat{0} \leq \tau \leq \sigma\}$
    is the face poset of a simplex.\n
    
      By the definition in~\cite{LuPanov11},  $\mathcal{Z}_{\mathcal{S}}$ is glued from the ``moment-angle blocks'' $(D^2,S^1)^{\sigma}$ according to the poset relation in $\mathcal{S}$. We can write $\mathcal{Z}_{\mathcal{S}}$
    as the colimit of a diagram on $\mathcal{S}$ defined by
    $\sigma\rightarrow(D^2,S^1)^{\sigma}$, that is
      $$\mathcal{Z}_{\mathcal{S}} = \underset{\sigma\in \mathcal{S}}{\mathrm{colim}}\,  (D^2,S^1)^{\sigma}.$$
    
   \n
        
    For each $\sigma \in \mathcal{S}$ we assign a geometrical simplex $\Delta^{\sigma}$
    whose face poset is $[\hat{0},\sigma]$. By gluing these geometrical simplices
    together according to the order relation in $\mathcal{S}$, we get a cell
    complex $\Delta^{\mathcal{S}}$ in which the closure of each cell is identified with a simplex
    preserving the face structure, and all attaching maps are inclusions.    
      We call $\Delta^{\mathcal{S}}$ the \emph{geometrical realization} of $\mathcal{S}$.
       For convenience, we still use $\Delta^{\sigma}$ to denote the image of each
    geometrical simplex $\Delta^{\sigma}$ in $\Delta^{\mathcal{S}}$. \n

    In the rest of this section, we assume that $\mathcal{S}$
    is a finite simplicial poset.
    Let the \emph{vertex set} of $\Delta^{\mathcal{S}}$ be
     $$V(\Delta^{\mathcal{S}})= [m]=\{ 1,\cdots, m\}.$$
     Besides, we also use $v_1,\cdots, v_m \in \mathcal{S}$ to denote the 
     elements corresponding to the vertices of $\Delta^{\mathcal{S}}$.\n
     
    For any $\sigma\in \mathcal{S}$, let
     $V(\Delta^\sigma)\subseteq [m]$ be the 
     \emph{vertex set} of the simplex $\Delta^{\sigma}$. Note that here two different simplices of $\Delta^{\mathcal{S}}$ may have the same vertex set.
    
    \begin{itemize}
      \item Let $\mathrm{Sd}(\Delta^{\mathcal{S}})$ denote the barycentric subdivision 
      of $\Delta^{\mathcal{S}}$.
     For each nonempty simplex
   $\Delta^{\sigma}\in \Delta^{\mathcal{S}}$, let $F_{\sigma}$ denote the geometrical realization of the poset $\mathcal{S}_{\geq \sigma} = \{ \tau \in \mathcal{S}\,|\, \sigma\leq \tau \}$.
    So $F_{\sigma}$ is the subcomplex of $\mathrm{Sd}( \Delta^{\mathcal{S}})$ consisting of all simplices of the form
   $\sigma=\sigma_0 \lneq \sigma_1 \lneq \cdots \lneq \sigma_l$.\n
    
    \item  For any $J\subseteq [m]$, let $\mathcal{S}_J$ denote the \emph{full subposet} of $\mathcal{S}$ consisting of all $\sigma\in \mathcal{S}$ 
    with $V(\Delta^{\sigma})\subseteq J$. Then
    the geometrical realization $\Delta^{\mathcal{S}_J}
    =\varnothing$ if and only if $\mathcal{S}_J=\hat{0}$.\n

   \item  Let $Y^{\mathcal{S}}=u_0\mathrm{Sd}(\Delta^{\mathcal{S}})$ denote the cone of 
   $\mathrm{Sd}(\Delta^{\mathcal{S}})$ over a point $u_0$. \n
    
   \item There is a natural panel structure $\mathcal{P}$ on $Y^{\mathcal{S}}$ defined by:
    \begin{equation*}
     \mathcal{P} = \{ P_j = F_{v_j}\}^m_{j=1}.
   \end{equation*}
  \n
  \end{itemize}
  
 For a subset $J\subseteq [m]$, 
 \begin{itemize}
   \item If $J\neq \varnothing$, $P_{\cap J}$ is the disjoint union of all $F_{\sigma}$ with $V(\Delta^{\sigma})= J$. Let 
      $$c_J = \text{the number of connected components of $P_{\cap J}$}.$$       
      Note that there is a one-to-one
   correspondence between the components
   $F_{\sigma}$ in $P_{\cap J}$ and the simplices $\Delta^{\sigma}\subseteq \Delta^{\mathcal{S}}$
   with $V(\Delta^{\sigma})= J$ (see Figure~\ref{p:Face-Intersection} for an example).
     If $J=\varnothing$, $P_{\cap \varnothing} = Y^{\mathcal{S}}$ is contractible.
    
      \begin{figure}
         \includegraphics[width=0.34\textwidth]{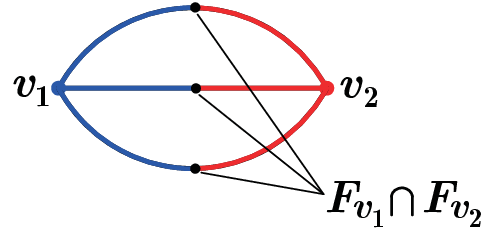}\\
          \caption{  }\label{p:Face-Intersection}
      \end{figure}
               
   \end{itemize}

  \begin{lem} \label{Lem:Poset-Homotopy-Equiv}
   For any $J\subseteq [m]$,  $P_J$ is homotopy equivalent to $\Delta^{\mathcal{S}_{J}}$.
  \end{lem}
  \begin{proof}
    The proof is analogous to the proof of~\cite[Corollary\,4G.3]{Hatcher02}. Let
  \[  \text{$\mathcal{U}_J =$  the cover of $P_J$ by 
    $\{ P_j = F_{v_j} \,|\, j\in J \}$.} \]
    
 Then $\mathcal{U}_J$ determines a \emph{diagram of spaces} $X_{\mathcal{U}_J}$ (see~\cite[Section\,4G]{Hatcher02} for the definition) where
  \begin{itemize}
   \item the vertices of $X_{\mathcal{U}_J}$ are the finite intersections of $P_j$'s in $\mathcal{U}_J$, and
   the edges of $X_{\mathcal{U}_J}$ are inclusions;\n
   \item the base of $X_{\mathcal{U}_J}$, denoted by 
  $\Gamma_J$, is the barycentric subdivision of the nerve complex $N(\mathcal{U}_J)$ of $\mathcal{U}_J$, . 
  \end{itemize}
  
  From the diagram $X_{\mathcal{U}_J}$, one can construct  
  a space $\Delta X_{\mathcal{U}_J}$ called the \emph{realization} of $X_{\mathcal{U}_J}$ (see~\cite[Section\,4G]{Hatcher02}). Moreover, 
  by~\cite[Proposition\,4G.2]{Hatcher02}, the space    $\Delta X_{\mathcal{U}_J}$ is homotopy equivalent to $P_J$. 
  \n
 For each $I\subseteq J$,  the
 intersection $\bigcap_{j\in I} P_j = P_{\cap I}$
 is the disjoint union of all $F_{\sigma}$'s with $V(\Delta^{\sigma})= I$, which has $c_I$ components.
 So we define another diagram of spaces
 over $\Gamma_J$, denoted by $Z_J$, where
 \begin{itemize}
  \item the vertices of $Z_J$ are
disjoint union of $c_I$ points;
\n
\item the edges of $Z_J$ consist of
maps sending a vertex corresponding to $F_{\sigma}$
to a vertex corresponding to $F_{\tau}$ with $\tau \subsetneq \sigma$.
\end{itemize}

Then by~\cite[Proposition\,4G.1]{Hatcher02}, the map $\Delta X_{\mathcal{U}_J} \rightarrow \Delta Z_J$ induced by sending each $F_{\sigma}$ to a point is a homotopy equivalence. On the other hand, since $\Gamma_J$
is the barycentric subdivision of the nerve complex of $\mathcal{U}_J$, it is routine to check that $\Delta Z_J$
is homeomorphic to $\Delta^{\mathcal{S}_{J}}$.
This proves the lemma.
      \end{proof}
   
 For any $(\mathbb{X},\mathbb{A})= \{ (X_j,A_j,a_j) \}^m_{j=1}$, we can construct the polyhedral product
 $(\mathbb{X},\mathbb{A})^{(Y^{\mathcal{S}},\mathcal{P})}$.  From our general theorems of polyhedral products over
 a manifold with faces, we can derive the following results.\n
    
      \begin{itemize}
      \item If every $X_j$ in $(\mathbb{X},\mathbb{A})= \{ (X_j,A_j,a_j) \}^m_{j=1}$ is contractible, we obtain from Theorem~\ref{Thm:Main-General-X-Contract} that
       \begin{equation*}  
         \mathbf{\Sigma}\big( (\mathbb{X},\mathbb{A})^{(Y^{\mathcal{S}},\mathcal{P})} \big) \simeq \bigvee_{J\subseteq [m]} 
          \mathbf{\Sigma}\Big(  ( Y^{\mathcal{S}}\slash P_J ) \wedge  \bigwedge_{j\in J} A_j \Big).
      \end{equation*} 
  Then since $Y^{\mathcal{S}}\slash P_J = u_0\mathrm{Sd}(\Delta^{\mathcal{S}})\slash P_J \simeq u_0 P_J\slash P_J \simeq \mathbf{\Sigma}(P_J) \simeq \mathbf{\Sigma}( \Delta^{\mathcal{S}_{J}})$,      
       \end{itemize}   
       \begin{align}  \label{Equ:X-A-S-Stable-Decomp}
       \mathbf{\Sigma}\big( (\mathbb{X},\mathbb{A})^{(Y^{\mathcal{S}},\mathcal{P})} \big)
   \simeq  \bigvee_{J\subseteq [m]} 
          \mathbf{\Sigma}\Big(  \mathbf{\Sigma}( \Delta^{\mathcal{S}_{J}}) \wedge  \bigwedge_{j\in J} A_j \Big) \simeq \bigvee_{J\subseteq [m]} 
             \mathbf{\Sigma} \Big( \Delta^{\mathcal{S}_J} * \bigwedge_{j\in J} A_j \Big).
      \end{align}    
    \begin{itemize} 
      \item If every $A_j$ in $(\mathbb{X},\mathbb{A})= \{ (X_j,A_j,a_j) \}^m_{j=1}$ is contractible,  Theorem~\ref{Thm:Stable-Decomp-A-Contract} gives
       \[ \qquad \ S^1\vee \mathbf{\Sigma}\big( (\mathbb{X},\mathbb{A})^{(Y^{\mathcal{S}},\mathcal{P})} \big) \simeq \bigvee_{J\subseteq [m]} 
  \mathbf{\Sigma}\Big( \big( P_{\cap J}\cup y_0\big) \wedge  \bigwedge_{j\in J}  X_j \Big).\]
  \begin{itemize}
    \item If $J\neq \varnothing$, 
     $P_{\cap J}$ is the disjoint union of all $F_{\sigma}$ with $V(\Delta^{\sigma})= J$. 
     Then since each $F_{\sigma}$ is contractible, $\big( P_{\cap J}\cup y_0\big) \wedge  \bigwedge_{j\in J}  X_j$ is homotopy equivalent to a wedge of $c_J$ 
     copies of $\bigwedge_{j\in J}  X_j$, denoted by
      $\bigvee_{c_J} \bigwedge_{j\in J}  X_j$.\nn
      
      \item If $J= \varnothing$, 
       $\big( P_{\cap J}\cup y_0\big) \wedge  \bigwedge_{j\in J}  X_j  = Y^{\mathcal{S}}\cup y_0$.\nn
     \end{itemize}
     
  From the above discussion, we obtain 
      \[ \qquad \ S^1\vee \mathbf{\Sigma}\big( (\mathbb{X},\mathbb{A})^{(Y^{\mathcal{S}},\mathcal{P})} \big) \simeq  \mathbf{\Sigma}(Y^{\mathcal{S}}\cup y_0) \vee \bigvee_{\varnothing\neq J\subseteq [m]} 
  \mathbf{\Sigma}\Big( \bigvee_{c_J} \bigwedge_{j\in J}  X_j \Big).\]
  Since $Y^{\mathcal{S}} = u_0\mathrm{Sd}(\Delta^{\mathcal{S}})$ is contractible, 
  $\mathbf{\Sigma}(Y^{\mathcal{S}}\cup y_0)\simeq S^1$.
  So we have
   \begin{equation} 
   \mathbf{\Sigma}\big( (\mathbb{X},\mathbb{A})^{(Y^{\mathcal{S}},\mathcal{P})} \big) \simeq   \bigvee_{\varnothing\neq J\subseteq [m]} 
  \mathbf{\Sigma}\Big( \bigvee_{c_J} \bigwedge_{j\in J}  X_j \Big). \end{equation}

 \end{itemize}
 \n
 
   In the rest of this section, we focus on the study of 
  $(D^2,S^1)^{(Y^{\mathcal{S}},\mathcal{P})}$.   
   There is a canonical $(S^1)^m$-action on $(D^2,S^1)^{(Y^{\mathcal{S}},\mathcal{P})}$. By an argument completely parallel to the proof of Lemma~\ref{Lem:Homotopy-1}, we can prove that
   $(D^2,S^1)^{(Y^{\mathcal{S}},\mathcal{P})}$ is equivariantly 
   homotopy equivalent to the moment-angle complex
   $\mathcal{Z}_{\mathcal{S}}$ over $\mathcal{S}$. Then by our Theorem~\ref{Thm:Main-General-X-Contract} (also see~\eqref{Equ:X-A-S-Stable-Decomp}), we obtain the following proposition.
   
   \begin{prop} \label{Prop:Simp-Poset-Stable-Decomp}
   For any finite simplicial poset $\mathcal{S}$ on $[m]$, 
    \begin{align}  \label{Equ:Z-S-Stable-Decomp}
\mathbf{\Sigma}\big( \mathcal{Z}_{\mathcal{S}}\big) \simeq \mathbf{\Sigma}\big( (D^2,S^1)^{(Y^{\mathcal{S}},\mathcal{P})} \big)
   \simeq  \bigvee_{J\subseteq [m]} 
          \mathbf{\Sigma}^{|J|+2}
          \big(\Delta^{\mathcal{S}_{J}}\big).
 \end{align}    
\end{prop}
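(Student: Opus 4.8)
The plan is to read off the proposition from the stable decomposition~\eqref{Equ:X-A-S-Stable-Decomp} together with the fact that $\mathcal{Z}_{\mathcal{S}}$ and $(D^2,S^1)^{(Y^{\mathcal{S}},\mathcal{P})}$ have the same homotopy type. For the latter I would argue exactly as in the proof of Lemma~\ref{Lem:Homotopy-1}. Since $I_{F_{\sigma}}=V(\Delta^{\sigma})$ for every $\sigma\in\mathcal{S}$, one has $(D^2,S^1)^{F_{\sigma}}=F_{\sigma}\times (D^2,S^1)^{\sigma}$, and both spaces arise as colimits of diagrams indexed by the poset $\mathcal{S}$:
$$\mathcal{Z}_{\mathcal{S}}=\underset{\sigma\in\mathcal{S}}{\mathrm{colim}}\,(D^2,S^1)^{\sigma},\qquad (D^2,S^1)^{(Y^{\mathcal{S}},\mathcal{P})}=\underset{\sigma\in\mathcal{S}}{\mathrm{colim}}\,\big(F_{\sigma}\times (D^2,S^1)^{\sigma}\big).$$
The coordinate projection forgetting the $F_{\sigma}$-factor is a map of diagrams over $\mathcal{S}$ whose value at each $\sigma$ is a homotopy equivalence --- in fact an $(S^1)^m$-equivariant one, since $(S^1)^m$ acts trivially on the contractible space $F_{\sigma}$. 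As every structure map in both diagrams is a CW-inclusion, hence a closed cofibration, Theorem~\ref{Thm:Homotopy-Colimit-Lemma} yields an $(S^1)^m$-equivariant homotopy equivalence $(D^2,S^1)^{(Y^{\mathcal{S}},\mathcal{P})}\simeq\mathcal{Z}_{\mathcal{S}}$, and in particular $\mathbf{\Sigma}\big(\mathcal{Z}_{\mathcal{S}}\big)\simeq\mathbf{\Sigma}\big((D^2,S^1)^{(Y^{\mathcal{S}},\mathcal{P})}\big)$.

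For the remaining equivalence I would specialize~\eqref{Equ:X-A-S-Stable-Decomp} to $(\mathbb{X},\mathbb{A})=(D^2,S^1)$, which is legitimate because each $D^2$ is contractible and each $S^1$ is connected. This gives
$$\mathbf{\Sigma}\big((D^2,S^1)^{(Y^{\mathcal{S}},\mathcal{P})}\big)\simeq\bigvee_{J\subseteq[m]}\mathbf{\Sigma}\Big(\mathbf{\Sigma}(\Delta^{\mathcal{S}_{J}})\wedge\bigwedge_{j\in J}S^1\Big).$$
Since $\bigwedge_{j\in J}S^1\cong S^{|J|}$ and smashing a based space with $S^{|J|}$ is its $|J|$-fold reduced suspension, the $J$-th wedge summand is $\mathbf{\Sigma}\big(\mathbf{\Sigma}(\Delta^{\mathcal{S}_{J}})\wedge S^{|J|}\big)\cong\mathbf{\Sigma}^{|J|+2}(\Delta^{\mathcal{S}_{J}})$, which is precisely~\eqref{Equ:Z-S-Stable-Decomp}. (For $J=\varnothing$ one has $\mathcal{S}_{\varnothing}=\hat 0$, so $\Delta^{\mathcal{S}_{\varnothing}}=\varnothing$ and, with the convention $\mathbf{\Sigma}^{k}(\varnothing)=S^{k-1}$, this summand is $S^{1}$, matching the contribution $\mathbf{\Sigma}(Y^{\mathcal{S}}_{+})\simeq S^{1}$ of the contractible space $Y^{\mathcal{S}}$.)

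The one step that is not purely formal is the identification $\mathcal{Z}_{\mathcal{S}}\simeq (D^2,S^1)^{(Y^{\mathcal{S}},\mathcal{P})}$: one must check that the face poset of $(Y^{\mathcal{S}},\mathcal{P})$ matches $\mathcal{S}$ compatibly with the two colimit presentations above and verify the closed-cofibration hypothesis of Theorem~\ref{Thm:Homotopy-Colimit-Lemma}. This is entirely parallel to the proof of Lemma~\ref{Lem:Homotopy-1}, so I expect no real difficulty; everything after it is a direct substitution into~\eqref{Equ:X-A-S-Stable-Decomp}.
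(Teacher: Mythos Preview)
Your proposal is correct and follows essentially the same route as the paper: the paper simply says that by an argument parallel to Lemma~\ref{Lem:Homotopy-1} one has an equivariant homotopy equivalence $(D^2,S^1)^{(Y^{\mathcal{S}},\mathcal{P})}\simeq\mathcal{Z}_{\mathcal{S}}$, and then invokes Theorem~\ref{Thm:Main-General-X-Contract} via~\eqref{Equ:X-A-S-Stable-Decomp}, which is exactly what you spell out. One minor quibble: in your parenthetical about $J=\varnothing$, note that Theorem~\ref{Thm:Main-General-X-Contract} already cancels the extra $S^1$ coming from the disjoint basepoint, so the $J=\varnothing$ summand there is $\mathbf{\Sigma}(Y^{\mathcal{S}})$, which is contractible, rather than $\mathbf{\Sigma}(Y^{\mathcal{S}}_+)\simeq S^1$; thus $\mathbf{\Sigma}^{2}(\Delta^{\mathcal{S}_\varnothing})$ should be read as a point, not as $S^1$.
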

   
   It was shown in~\cite[Theorem 3.5]{LuPanov11} that the integral cohomology ring of $\mathcal{Z}_{\mathcal{S}}$ is isomorphic as a graded algebra to
   the Tor algebra of the face ring $\Z[\mathcal{S}]$:
   \[ H^*(\mathcal{Z}_{\mathcal{S}};\Z)\cong \mathrm{Tor}_{\Z[v_1,\cdots,v_m]}(\Z[\mathcal{S}],\Z).  \]
   
   On the other hand, by our Corollary~\ref{Cor:D2-S1}, there is a ring isomorphism (up to a sign) between the integral cohomology ring of $(D^2,S^1)^{(Y^{\mathcal{S}},\mathcal{P})}$
 and
  $\big( \mathcal{R}^*_{Y^{\mathcal{S}},\mathcal{P}}, \Cup \big)$, where
  \[ \mathcal{R}^*_{Y^{\mathcal{S}},\mathcal{P}}=\bigoplus_{J \subseteq [m]} H^*(Y^{\mathcal{S}},P_J). \]  
  
  Moreover, we can make this isomorphism degree-preserving by shifting the degrees of all the elements in $H^*(Y^{\mathcal{S}}, P_J)$ up by $|J|$ for every $J\subseteq [m]$). Then since $\mathcal{Z}_{\mathcal{S}}$
  is homotopy equivalent to $(D^2,S^1)^{(Y^{\mathcal{S}},\mathcal{P})}$, we obtain the following proposition which gives
  a new topological description of 
  $\mathrm{Tor}_{\Z[v_1,\cdots,v_m]}(\Z[\mathcal{S}],\Z)$.\n
  
  \begin{prop} \label{Prop:Tor-Explain}
   For a finite simplicial poset $\mathcal{S}$ on $[m]$,
  there is a graded algebra isomorphism (up to a sign) between 
  $\mathrm{Tor}_{\Z[v_1,\cdots,v_m]}(\Z[\mathcal{S}],\Z)$
  and $\big( \mathcal{R}^*_{Y^{\mathcal{S}},\mathcal{P}}, \Cup \big)$ by shifting the degrees of all the elements in $H^*(Y^{\mathcal{S}}, P_J)$ up by $|J|$
  for every $J\subseteq [m]$.
  \end{prop}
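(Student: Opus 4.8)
The plan is to obtain the claimed isomorphism as a composition of three identifications, two of which have already been established in the preceding discussion and one of which is the cited theorem of L\"u--Panov. So the proof will be short: it is really a matter of chaining isomorphisms and checking that the gradings line up.

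First I would record the homotopy equivalence $\mathcal{Z}_{\mathcal{S}} \simeq (D^2,S^1)^{(Y^{\mathcal{S}},\mathcal{P})}$, which is proved exactly as Lemma~\ref{Lem:Homotopy-1}. Both spaces are colimits of diagrams over the poset $\mathcal{S}$, namely $\sigma\mapsto (D^2,S^1)^{\sigma}$ and $\sigma\mapsto (D^2,S^1)^{F_{\sigma}}$, with structure maps the natural inclusions. The projection $(D^2,S^1)^{F_{\sigma}} = F_{\sigma}\times (D^2,S^1)^{\sigma}\to (D^2,S^1)^{\sigma}$ is a map of diagrams over $\mathcal{S}$ which is an objectwise homotopy equivalence because every $F_{\sigma}$ is contractible, and the relevant maps $\mathrm{colim}_{\tau>\sigma}(\cdot)(\tau)\hookrightarrow(\cdot)(\sigma)$ are closed cofibrations, so Theorem~\ref{Thm:Homotopy-Colimit-Lemma} gives a homotopy equivalence of the colimits. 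In particular $H^*(\mathcal{Z}_{\mathcal{S}};\Z)\cong H^*\big((D^2,S^1)^{(Y^{\mathcal{S}},\mathcal{P})};\Z\big)$ as graded rings.

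Next I would apply Corollary~\ref{Cor:D2-S1} to the space with faces $(Y^{\mathcal{S}},\mathcal{P})$: it supplies a ring isomorphism $\big(\mathcal{R}^*_{Y^{\mathcal{S}},\mathcal{P}},\Cup\big)\cong H^*\big((D^2,S^1)^{(Y^{\mathcal{S}},\mathcal{P})}\big)$ that becomes degree-preserving once the summand $H^*(Y^{\mathcal{S}},P_J)$ is shifted up by $|J|$ for each $J\subseteq[m]$. Composing with the identification of the previous paragraph, $\big(\mathcal{R}^*_{Y^{\mathcal{S}},\mathcal{P}},\Cup\big)$ is isomorphic as a graded ring, with that shift, to $H^*(\mathcal{Z}_{\mathcal{S}};\Z)$. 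Finally I would invoke~\cite[Theorem 3.5]{LuPanov11}, which gives a graded algebra isomorphism $H^*(\mathcal{Z}_{\mathcal{S}};\Z)\cong \mathrm{Tor}_{\Z[v_1,\cdots,v_m]}(\Z[\mathcal{S}],\Z)$. Chaining the three isomorphisms yields the proposition.

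The only point requiring genuine care — and what I expect to be the main (and rather modest) obstacle — is the bookkeeping of gradings: the Tor-algebra carries the usual internal/homological bigrading, and I must check that L\"u--Panov's total degree on $\mathrm{Tor}^{-i,2j}$, namely $2j-i$, agrees with the cohomological degree in $H^{2j-i}(\mathcal{Z}_{\mathcal{S}})$ in the same normalization under which Corollary~\ref{Cor:D2-S1} states its $|J|$-shift, so that the composite is honestly degree-preserving rather than merely an abstract ring isomorphism. Since both the L\"u--Panov isomorphism and the shifted Corollary~\ref{Cor:D2-S1} isomorphism land in the single-graded ring $H^*(\mathcal{Z}_{\mathcal{S}};\Z)$, this reduces to tracking indices: a class of $\mathrm{Tor}^{-i,2j}$ corresponds to an element of $H^{2j-i}(Y^{\mathcal{S}},P_J)$, and this degree is precisely the one assigned after the shift by $|J|$. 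No computation beyond this index-matching is needed; if desired one could additionally spell out the generators-and-relations dictionary between $\mathcal{R}^*_{Y^{\mathcal{S}},\mathcal{P}}$ and the Koszul/Tor presentation, but that is not required for the stated isomorphism.
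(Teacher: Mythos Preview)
Your proposal is correct and follows essentially the same route as the paper: the paper derives the proposition precisely by chaining the homotopy equivalence $\mathcal{Z}_{\mathcal{S}}\simeq (D^2,S^1)^{(Y^{\mathcal{S}},\mathcal{P})}$ (argued parallel to Lemma~\ref{Lem:Homotopy-1}), the ring isomorphism of Corollary~\ref{Cor:D2-S1} with its $|J|$-shift, and the L\"u--Panov identification $H^*(\mathcal{Z}_{\mathcal{S}};\Z)\cong \mathrm{Tor}_{\Z[v_1,\cdots,v_m]}(\Z[\mathcal{S}],\Z)$. Your extra paragraph on the grading bookkeeping is more explicit than what the paper writes, but the argument is the same.
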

  
  In addition, it was shown in~\cite[Corollary\,3.10]{LuPanov11} that $\mathrm{Tor}_{\Z[v_1,\cdots,v_m]}(\Z[\mathcal{S}],\Z)$ can be computed by a
  generalized Hochster's formula for the poset
  $\mathcal{S}$, from which one obtains the following formula:
   \begin{equation} \label{Equ:Z-S-Cohomology}
      H^p(\mathcal{Z}_{\mathcal{S}})\cong \bigoplus_{J \subseteq [m]} \widetilde{H}^{p-|J|-1}(\Delta^{\mathcal{S}_{J}}),\, \forall p\in \Z. 
  \end{equation}   
 Clearly, this formula also follows from our stable decomposition of $\mathcal{Z}_{\mathcal{S}}$ in~\eqref{Equ:Z-S-Stable-Decomp}.\n

   \n
   
 According to~\cite[Remark 3.2]{LuPanov11}, the equivariant
   cohomology ring of $\mathcal{Z}_{\mathcal{S}}$ with integral coefficients is isomorphic to the face ring $\Z[\mathcal{S}]$ of $\mathcal{S}$, that is:
   \begin{equation}  \label{Equ:Z-S-Equiv-Cohom}
      H^*_{T^m} (\mathcal{Z}_{\mathcal{S}}) = 
   H^*_{T^m}\big(\underset{\sigma\in \mathcal{S}}{\mathrm{colim}}\,  (D^2,S^1)^{\sigma}\big)\cong H^*\big(\underset{\sigma\in \mathcal{S}}{\mathrm{colim}}\,(\mathbb{C} P^{\infty}, p t)^{\sigma}\big) \cong \mathbb{Z}[\mathcal{S}]. 
   \end{equation}
    Recall that
   the \emph{face ring} of a simplicial poset $\mathcal{S}$ 
   with $\mathbf{k}$-coefficients is the quotient ring (see Stanley~\cite{St91})
        \begin{equation}\label{Equ:Face-Ring-Def-2}
           \mathbf{k}[\mathcal{S}]:= \mathbf{k}[\,\mathbf{v}_{\sigma} :\sigma\in \mathcal{S}\,] \slash 
          \mathcal{I}_{\mathcal{S}}  
         \end{equation}  
        where $\mathcal{I}_{\mathcal{S}}$ is the ideal generated by all the elements of 
        the form
        \[ \mathbf{v}_{\hat{0}}-1, \ \ \mathbf{v}_{\sigma}\mathbf{v}_{\tau} - \mathbf{v}_{\sigma\wedge \tau} \cdot 
                        \sum_{\eta\in \sigma\vee\tau} \mathbf{v}_{\eta}, \]
       where $\sigma\wedge\tau$ denotes the greatest common lower bound of $\sigma$ and $\tau$, 
       and $\sigma\vee\tau$ denotes the set of least
common upper bounds of $\sigma$ and $\tau$. The sum over the empty set is assumed to be zero. So we have $\mathbf{v}_{\sigma}\mathbf{v}_{\tau}=0$ in $\mathbf{k}[\mathcal{S}]$ if $\sigma\vee\tau=\varnothing$.   
      If $\mathcal{S}$
      is a simplicial complex $K$, 
       the definition~\eqref{Equ:Face-Ring-Def-2}
      of $\mathbf{k}[K]$ is related to the definition~\eqref{Equ:Face-Ring-Def-1}
      of $\mathbf{k}[K]$ by
      mapping $\mathbf{v}_{\sigma}$ to $\prod_{j\in \sigma} v_j$.\n

   On the other hand, by our Theorem~\ref{Thm:Equivariant-Cohomology-Y-F}, the equivariant cohomology ring of
   $(D^2,S^1)^{(Y^{\mathcal{S}},\mathcal{P})}$ with integral coefficients is isomorphic to the topological face ring
     \begin{equation*} 
      \Z(Y^{\mathcal{S}}, \mathcal{P}) := \bigoplus_{J\subseteq [m]} H^*( P_{\cap J} )\otimes R^J_{\Z}
      \end{equation*}
  where the product is defined in~\eqref{Equ:Face-Ring-Product}.  
  The following proposition implies that our result agrees with the conclusion~\eqref{Equ:Z-S-Equiv-Cohom} from~\cite{LuPanov11}.
 
 \begin{prop}
  There is a ring isomorphism from $\Z[\mathcal{S}]$
  to $\Z(Y^{\mathcal{S}}, \mathcal{P})$.
  \end{prop}
 \begin{proof}
   By our preceding discussion,
   $$H^n( P_{\cap J} )\cong    
    \begin{cases}
    \Z^{c_J} ,  &  \text{if $n=0$}; \\
    \, 0,  &  \text{otherwise};
 \end{cases} $$
  where
   $c_J$ is the number of connected components of 
   $P_{\cap J}$ which correspond to all the simplices
   $\Delta^{\sigma}\subseteq \Delta^{\mathcal{S}}$ with $V(\Delta^{\sigma})=J$.  For convenience, denote the vertex set of $\Delta^{\sigma}$ by $J_{\sigma}$, that is
     $$ J_{\sigma} = V(\Delta^{\sigma}) \subseteq [m].$$
   
   So we have
   \[ \Z(Y^{\mathcal{S}}, \mathcal{P}) \cong \bigoplus_{J\subseteq [m]}  
   \bigoplus_{\sigma\in\mathcal{S},J_{\sigma}=J} R^{J,\sigma}_{\Z} = \bigoplus_{\sigma\in\mathcal{S}} R^{J_{\sigma},\sigma}_{\Z}.  \]
   where $R^{J_{\sigma},\sigma}_{\Z} = R^{J_{\sigma}}_{\Z}$
   for any $\sigma\in \mathcal{S}$.  \n
   
   On the other hand, by~\cite[Corollary\,5.7]{MasPanov06}, every element of $\Z[\mathcal{S}]$ can be written uniquely as a linear combination
   \[  \sum_{\sigma_1\lneq \cdots \lneq \sigma_q} A(\sigma_1\lneq \cdots\lneq \sigma_q; l_1,\cdots, l_q)\cdot \mathbf{v}^{l_1}_{\sigma_1}\cdots \mathbf{v}^{l_q}_{\sigma_q} \]
   where $A(\sigma_1\lneq \cdots\lneq \sigma_q; l_1,\cdots,l_q)\in \Z$ and the sum is taken over all
   chains of elements $\sigma_1\lneq \cdots\lneq \sigma_q$
   in $\mathcal{S}$ with positive integers $l_1,\cdots, l_q$.\n
   
   Then according to the maximal element in the chain
   $\sigma_1\lneq \cdots \lneq \sigma_q$ in $\mathcal{S}$,
   we can write $\Z[\mathcal{S}]$ as a direct sum of subrings
   $$   \Z[\mathcal{S}] = \Z \oplus \bigoplus_{\hat{0}\neq \omega\in\mathcal{S}} \Z[\mathcal{S}]^{\omega}$$
   where $\Z[\mathcal{S}]^{\omega}$ consists of elements
   of the form
         \[  \sum_{\sigma_1\lneq \cdots \lneq \sigma_{q-1}\lneq\omega} A(\sigma_1\lneq \cdots\lneq \sigma_{q-1}\lneq \omega; l_1,\cdots, l_{q-1},l_q)\cdot\mathbf{v}^{l_1}_{\sigma_1}\cdots \mathbf{v}^{l_{q-1}}_{\sigma_{q-1}} \mathbf{v}^{l_q}_{\omega}. \]
   
  We can construct a ring homomorphism $\rho : 
     \Z[\mathcal{S}]\rightarrow  
     \Z(Y^{\mathcal{S}}, \mathcal{P})$ as follows.
     Define $\rho(\hat{0})=1$, and for any chain $\sigma_1\lneq \cdots\lneq \sigma_q$ in
   $\mathcal{S}$, let
     $$\rho(\mathbf{v}^{l_1}_{\sigma_1}\cdots \mathbf{v}^{l_q}_{\sigma_q}) = f_{\sigma_1}(x)^{l_1}\cdots f_{\sigma_q}(x)^{l_q} \in R^{J_{\sigma_q},\sigma_q}_{\Z}  $$
   where for any $\hat{0}\neq \sigma\in\mathcal{S}$, 
   $f_{\sigma}(x) = x_{i_1}\cdots x_{i_s}$ if $J_{\sigma}
   =\{v_{i_1},\cdots, v_{i_s}\}$. \n
   
  We claim that $\rho$ is a ring isomorphism.
   Indeed, for each $\hat{0}\neq \sigma\in\mathcal{S}$,
    we can think of 
   $R^{J_{\sigma},\sigma}_{\Z}$ as a subring of
   the face ring $\Z(\Delta^{\sigma})$ of $\Delta^{\sigma}$. So by the
   equivalence of the two definitions of $\Z(\Delta^{\sigma})$ given by~\eqref{Equ:Face-Ring-Def-1} and~\eqref{Equ:Face-Ring-Def-2}, the restriction of $\rho$ to $\Z[\mathcal{S}]^{\sigma}$ 
  is a ring isomorphism from $\Z[\mathcal{S}]^{\sigma}$ onto $R^{J_{\sigma},\sigma}_{\Z}$. Then the proposition follows.     
  \end{proof}
  
  By the above proposition, we can consider the topological face ring of
  a panel structure as a generalization of 
  the face ring of a simplicial poset as well. 
   
\vskip .4cm
    
  \section{Moment-angle complexes and
   polyhedral products over manifolds with corners}
    \label{Sec:Reexam-Manifold-Corners}
    
    An \emph{$n$-dimensional manifold with corners} $Q$ is a
   Hausdorff space together with a maximal
  atlas of local charts onto open subsets of $\R_{\geq 0}^n $
  such that the transitional functions are homeomorphisms which preserve the codimension of each point. Here
  the \emph{codimension} $c(x)$ of a point $x=(x_1,\cdots,x_n)$ in $\R_{\geq 0}^n$ is the number of
  $x_i$ which are $0$. So we have a well defined map
  $c: Q\rightarrow \Z_{\geq 0}$ where $c(q)$ is the codimension of a point $q\in Q$. In particular, the interior $Q^{\circ}$ of $Q$ consists of points of codimension $0$, i.e. $Q^{\circ} = c^{-1}(0)$. \n

   Suppose $Q$ is an $n$-dimensional manifold with corners with $\partial Q\neq \varnothing$.
  An \emph{open face} of $Q$ of codimension $k$
  is a connected component of $c^{-1}(k)$. A \emph{(closed) face}
  is the closure of an open face. A face of codimension $1$ is called
  a \emph{facet} of $Q$.
  Let $F_1,\cdots, F_m$ be all the facets of $Q$.\n 
  
      A cell decomposition of $Q$ is called \emph{compatible} with its manifold with corners structure  if all facets of $Q$ are subcomplexes in the cell decomposition. In the following, we assume that $Q$ is equipped with a compatible cell decomposition. Then there is a natural panel structure on $Q$ defined by
    \begin{equation}\label{Equ:Panel-Mfd-With-Corners}
     \mathcal{P} = \{ P_j = F_j\}^m_{j=1}. 
     \end{equation}
 
Then we can apply all our theorems proved in Section~\ref{Sec:Stable-Decomp}, \ref{Sec:Cohom-Ring} and~\ref{Sec:Equiv-Cohom} to obtain the corresponding results for any
  polyhedral product $(\mathbb{X},\mathbb{A})^{(Q,\mathcal{P})}$.  
  \n

  \begin{itemize}
    \item If $Q$ is a convex polytope in an Euclidean space,
      then $Q$ is a manifold with corners with an obvious compatible cell decomposition. The faces of $(Q,\mathcal{P})$ are exactly the faces of $Q$. Let 
      $K_Q$ be the nerve complex of $Q$ covered by
   its facets.       
    Since each face of $Q$ is also a convex polytope hence contractible, we can use a similar argument 
    as the proof of Lemma~\ref{Lem:Homotopy-1} to show that 
    $(\mathbb{X},\mathbb{A})^{(Q,\mathcal{P})}$ is homotopy
    equivalent to $(\mathbb{X},\mathbb{A})^{K_Q}$. In particular, we have $(D^2,S^1)^{(Q,\mathcal{P})}\simeq (D^2,S^1)^{K_Q}$. In addition, there is another notion called \emph{moment-angle space} of $Q$ (denoted by $\mathcal{Z}_Q$) defined in~\cite{AntonBuch11}.
    It was proved in~\cite{AntonBuch11} that 
    $\mathcal{Z}_Q$ is also homotopy equivalent to $(D^2,S^1)^{K_Q}$.
      \n
   
   \item The polyhedral product
   $(\mathbb{X},\mathbb{A})^{(Q,\mathcal{P})}$ generalizes 
   the polyhedral products over a nice manifold with corners defined in~\cite[Section 4]{Yu20}. Recall
   that a manifold with corners $Q$ is said to be \emph{nice} if either its boundary $\partial Q$ is empty or
  $\partial Q$ is non-empty and any codimension-$k$ face of $Q$ is a connected component of the intersection of
  $k$ different facets in $Q$. Moreover, the definition of $(\mathbb{X},\mathbb{A})^{(Q,\mathcal{P})}$ clearly makes sense for an arbitrary manifold with corners (not necessarily nice).
  \end{itemize}
  \vskip .5cm
   \section*{Acknowledgment}
   This work is partially supported by National Natural Science Foundation of China (grant no.11871266) and the PAPD (priority academic program development) of Jiangsu higher education institutions.
 \nn\nn
 \noindent \textbf{Competing Interests.} The author declares no competing interests pertaining to the
undertaken research.
  
 \vskip .4cm


\begin{thebibliography}{99}
 \bibitem{AntonBuch11}
 A.~A.~Ayzenberg and V.~M.~Buchstaber, \emph{Nerve complexes and moment-angle spaces of convex polytopes}. Proc. Steklov Inst. Math. 275 (2011), no.~\textbf{1}, 15--46.
 
 \bibitem{BBCG10}
   A.~Bahri, M.~Bendersky, F.~Cohen and S.~Gitler, 
    \emph{The Polyhedral Product Functor: a method of computation for moment-angle complexes, 
     arrangements and related spaces}.
      Adv. Math.~\textbf{225} (2010) no.~\textbf{3}, 1634--1668. 
      
\bibitem{BBCG12}
   A.~Bahri, M.~Bendersky, F.~Cohen and S.~Gitler, 
   \emph{Cup products in generalized moment-angle complexes}. Math.~Proc.~Cambridge~Philos.~Soc. 153 (2012), no.~\textbf{3}, 457--469.    

\bibitem{BBCG17}
 A.~Bahri, M.~Bendersky, F.~Cohen and S.~Gitler,
\emph{A spectral sequence for polyhedral products}.
Adv. Math. 308 (2017), 767--814.

\bibitem{BBCG20}
 A.~Bahri, M.~Bendersky, F.~Cohen and S.~Gitler, \emph{A Cartan formula for the cohomology of polyhedral products and its application to the ring structure}.
  arXiv:2009.06818.
 

\bibitem{BasBuchPanov04}
 I.~V.~Baskakov, V.~M.~Buchstaber and T.~E.~Panov,  \emph{Algebras of cellular cochains, and torus actions}. (Russian) Uspekhi Mat. Nauk 59 (2004), no.~\textbf{3} 
 (357), 159--160; translation in Russian Math. Surveys 59 (2004), no.~\textbf{3}, 562--563. 
 
\bibitem{BP00}
 V.~M.~Buchstaber and T.~E.~Panov, \emph{Actions of tori, combinatorial topology and homological algebra}. Russian
Math. Surveys, 55 (2000), 825--921. 
 
\bibitem{BP02} 
  V. M. Buchstaber and T.~E. Panov,
 \emph{Torus actions and their applications in topology and
 combinatorics}. University Lecture Series, \textbf{24}.
 American Mathematical Society, Providence, RI, 2002.   

 \bibitem{BP15}
V. M. Buchstaber and T.~E. Panov, \emph{Toric Topology}.
 Mathematical Surveys and Monographs, Vol.~204, 
 American Mathematical Society, Providence, RI, 2015.
 
 \bibitem{CaiLi17}
   L.~Cai, \emph{On products in a real moment-angle manifold}. J. Math. Soc. Japan 69 (2017), no.~\textbf{2}, 503--528.
   
     
\bibitem{Da83}
 M.~W.~Davis, 
 \emph{Groups generated by reflections and aspherical manifolds not covered by
Euclidean space}. Ann. of Math. 117 (1983), 293--324. 

  
\bibitem{DaJan91}  M.~W.~Davis and T.~Januszkiewicz, \textit{Convex polytopes,
Coxeter orbifolds and torus actions}. Duke Math. J. \textbf{62}
(1991), no.~\textbf{2}, 417--451.

\bibitem{Franz06}
  M.~Franz, \emph{On the integral cohomology of smooth toric varieties}. Proc. Steklov Inst. Math. 252 (2006), 53--62.
  
 
\bibitem{Hatcher02}
   A.~Hatcher, \emph{Algebraic topology}. Cambridge University Press, Cambridge, 2002. 

\bibitem{Hoa88}
  L.~Hoa, \emph{On Segre products of affine semigroup rings}. Nagoya Math. J. 110 (1988), 113--128.    
  
\bibitem{John83}
  F.~E.~A.~Johnson, 
  \emph{On the triangulation of stratified sets and singular varieties}.
Trans. Amer. Math. Soc. 275 (1983), no.~\textbf{1}, 333--343.

\bibitem{LuPanov11}
   Z.~L\"u and T.~E.~Panov, \emph{Moment-angle complexes from simplicial posets}.
     Cent. Eur. J. Math. 9 (2011), no.~\textbf{4}, 715--730.
     
\bibitem{MasPanov06}
 M.~Masuda and T.~Panov, \textit{On the cohomology of torus manifolds}. Osaka J.~Math.~43
  (2006), no.~\textbf{3}, 711--746.     
     
\bibitem{St91}
   R.~P.~Stanley, \textit{f-vectors and h-vectors of simplicial posets}, 
   J. Pure Appl. Algebra. 71 (1991), 319--331. 
       
   
\bibitem{Vogt73}   
R.~Vogt, \emph{Homotopy limits and colimits}. 
 Math. Z. 134 (1973) 11--52.   
   

\bibitem{WangZheng15}
  X.~J.~Wang, Q.~B.~Zheng, \emph{The homology of simplicial complements and the cohomology of polyhedral products}.
Forum Math. 27 (2015), no.~\textbf{4}, 2267--2299.      
  
  
\bibitem{WelkZiegZiv99}
V.~Welker, G.~Ziegler, R.~\v{Z}ivaljevi\'{c}, \emph{Homotopy colimits-comparison lemmas for combinatorial applications},
J. Reine Angew. Math. 509 (1999) 117--149.  
  

\bibitem{Yu19}
  L.~Yu, \emph{On Hochster's formula for a class of quotient spaces of moment-angle complexes}. Osaka Journal of Mathematics 56 (2019), no.~\textbf{1},  33--50.   
  
    
\bibitem{Yu20}
  L.~Yu, \emph{A generalization of moment-angle manifolds with non-contractible orbit spaces}, 
  
  Algebr. Geom. Topol. 24 (2024) 449--492.
  
\bibitem{Zheng16}
  Q.~B.~Zheng, \emph{The cohomology algebra of polyhedral product spaces}. J. Pure Appl. Algebra 220 (2016), no.~\textbf{11}, 3752–3776.
    
\bibitem{ZiegZiv93}
G.~Ziegler, R.~\v{Z}ivaljevi\'{c}, \emph{Homotopy types of sub-space arrangements via diagrams of spaces}. Math. Ann. 295 (1993), 527--548.  

  
\end{thebibliography}
\end{document}